\documentclass[11pt,twoside]{article}
\usepackage{amsmath,amssymb,amsthm,amsfonts,amstext,amsbsy,amscd}

\usepackage{a4wide}
\usepackage{comment}
\usepackage{graphicx}
\usepackage{mathrsfs}


\frenchspacing

\numberwithin{equation}{section}

\newtheorem{problem}{Problem}

\swapnumbers

\newtheorem{satz}{Satz}[section]

\newtheorem{proposition}[satz]{Proposition}

\DeclareMathOperator{\E}{{\mathbb E}}

\DeclareMathOperator{\R}{{\mathbb R}}
\DeclareMathOperator{\C}{{\mathbb C}}

\DeclareMathOperator{\supp}{supp}

\DeclareMathOperator{\Var}{Var} \DeclareMathOperator{\Cov}{Cov}
 \DeclareMathOperator{\dist}{dist}

\providecommand{\eps}{\varepsilon}
\renewcommand{\phi}{\varphi}
\renewcommand{\theta}{\vartheta}
\renewcommand{\subset}{\subseteq}

\providecommand{\abs}[1]{\lvert #1 \rvert}
\providecommand{\norm}[1]{\lVert #1 \rVert}

\providecommand{\babs}[1]{{\Bigl\lvert #1 \Bigr\rvert}}

\renewcommand{\Re}{\operatorname{Re}}
\renewcommand{\Im}{\operatorname{Im}}
\renewcommand{\le}{\leqslant}
\renewcommand{\ge}{\geqslant}


\markboth{Markus Rei{\ss}}{Testing the characteristics of a L\'evy process} \pagestyle{myheadings}
\oddsidemargin0.5in\evensidemargin0.5in \topmargin0.5in 
\textheight8in



\setlength{\itemsep}{0cm}

\sloppy

\begin{document}
\title{Testing the characteristics of a L\'evy process}
\author{\parbox[t]{6.5cm}{\centering Markus
 Rei\ss\\[2mm]
 \normalsize{\it
Institute of Mathematics\\
Humboldt-Universit\"at zu Berlin}\\
\mbox{} mreiss@mathematik.hu-berlin.de}}


\maketitle

\begin{abstract}
For $n$ equidistant observations of a L\'evy process at time distance $\Delta_n$ we consider the problem of testing hypotheses on the volatility, the jump measure and its Blumenthal-Getoor index in a non- or semiparametric manner. Asymptotically as $n\to\infty$ we allow for both, the high-frequency regime $\Delta_n=\frac1n$ and the low-frequency regime $\Delta_n=1$ as well as intermediate cases. The approach via empirical characteristic function unifies existing theory and sheds new light on diverse results. Particular emphasis is given to asymptotic separation rates which reveal the complexity of these basic, but surprisingly non-standard inference questions.
\end{abstract}

\noindent {\it AMS subject classification:  62M02;60G51,60G52,60J75,62G10,62G20,91B84}\\

\newpage

\section{Introduction}

Jump processes have become a key model in stochastics over the recent years. On one hand this is due to an increasing need for modelling stochastic processes with jumps in areas ranging from physics and biology to finance and economics. On the other hand, there is a more and more profound understanding of the theory and properties of jump processes. While a general framework is certainly provided by semimartingale theory, L\'evy processes remain the basic building blocks. Many general results are derived by showing a good approximation by a L\'evy process, a semimartingale with constant and independent characteristics, on short time intervals. The statistics for jump processes and especially L\'evy processes has consequently attracted increasing attention over the last decade and is still a very active field of research. It has fascinating links to different fields of probability, statistics and analysis and is a source of important, suggestive and highly non-trivial problems.

Here we concentrate on hypothesis testing based on observations of a L\'evy process $(X_t,t\ge 0)$. We assume throughout $\E[X_t^2]<\infty$ such that the L\'evy-Khinchine formula in terms of the characteristic triplet ${\cal T}=(\sigma^2,b,\nu)$ can be described by
\[ \E[e^{iuX_\Delta}]=\exp(\Delta\psi(u)),\quad \psi(u)=ibu-\tfrac{\sigma^2}2 u^2+\int(e^{iux}-1-iux)\nu(dx).
\]
We are interested in hypotheses on the volatility $\sigma^2\ge 0$, the drift $b\in\R$ and the L\'evy or jump measure $\nu$. In our observation model we dispose of equidistant observations $X_0,X_\Delta,\ldots,X_{n\Delta}$ at time distance $\Delta>0$. We shall adopt an asymptotic point of view and let $n\to\infty$ such that all quantities may be indexed by $n$. The distance $\Delta_n$ may remain fixed which is up to a simple normalisation the so-called low-frequency setting $\Delta_n=1$ or the observation time $n\Delta_n$ may remain fixed which is up to a factor the high-frequency setting $\Delta_n=\frac1n$ (also called in-fill asymptotics in econometrics) or $\Delta_n$ may have an asymptotic behaviour inbetween, whence we always assume $\Delta_n\in [\frac1n,1]$. The different asymptotics of $\Delta_n$ lead to very different statistical procedures and one purpose of the present work is to provide a unifying framework to reconcile these sometimes quite disjoint strands of literature.

In the pure L\'evy case, where the characteristics are well understood in the characteristic exponent $\psi$, it is natural to base the inference on the empirical characteristic function
of the i.i.d. increments $\Delta^n_kX:=X_{k\Delta_n}-X_{(k-1)\Delta_n}$ (double usage of the symbol $\Delta$ is unfortunately standard):
\[ \hat\phi_n(u)=\frac1n \sum_{k=1}^ne^{iu\Delta_k^nX}.\]
We shall deal with complex-valued random variables $Z_i$ and define $\Cov_{\C}(Z_1,Z_2)=\E[Z_1\bar Z_2]-\E[Z_1]\overline{\E[Z_2]}$, $\Var_{\C}(Z_1)=\E[\abs{Z_1-\E[Z_1]}^2]$. With $\phi_n(u)=\E[e^{iu\Delta_k^nX}]$ standard calculations  yield
$\E[\hat\phi_n(u)]=\phi_n(u)$, $\Cov_{\C}(\hat\phi_n(u),\hat\phi_n(v))=\tfrac1n(\phi_n(u-v)-\phi_n(u)\phi_n(-v))$,
$\Var_{\C}(\hat\phi_n(u))=\tfrac1n(1-\abs{\phi_n(u)}^2)$.

For the high-frequency setting inference is often based on truncating the increments $\Delta_k^nX$ such that the diffusive part is separated from larger jumps. As we shall see, there are many parallels of these two approaches and the characteristic function ansatz provides also new insights in the case where we might approximate well a continuous-time observation statistics. On the other hand, as we shall see, a direct method of moments may be well suited even for the low-frequency setting.

The test problems we consider are grouped according to the object of interest: the volatility in Section 2, the degree of jump activity in Section 3 and the global jump measure in Section 4. Inference on the drift $b$ is straightforward since in our definition $\E[\Delta_k^nX]=\Delta_n b$ and a natural and in many senses efficient estimator is given by $\hat b_n=(n\Delta_n)^{-1}X_{n\Delta_n}$. When testing for instance the volatility $\sigma^2$ we want to be as agnostic as possible with respect to the jump measure and the drift which, of course, confound our observations. In fact, we are facing nonparametric testing problems in the sense of Ingster and Suslina \cite{IngsterSus2003}: we consider on an infinite-dimensional class $\Theta$ a finite or infinite dimensional hypothesis set $\Theta_0$ and  set $\Theta_1(r_n)=\{\theta\in\Theta\,|\,\dist(\theta,\Theta_0)\ge r_n\}$ in some metric with $r_n>0$ and aim at testing $H_0:\theta\in\Theta_0$ against the local nonparametric alternatives $H_1(r_n):\theta\in\Theta_1(r_n)$. The aim is to provide a test of uniform level $\alpha\in(0,1)$, at least asymptotically, which has asymptotically power 1 uniformly over the alternative $H_1(r_n)$ for a separation rate $r_n\to 0$ as fast as possible. Since testing and estimation are strongly related, the best separation rate is often (mainly for testing functionals) equal to the best estimation rate or can be translated to it. We shall focus here mainly on these rates which quantify the complexity of the statistical problem and only touch upon asymptotically exact critical values. Deriving the latter becomes much more technical, e.g. requiring uniform central limit theorems under $H_0$, or degenerate since often an imposed bias condition on the nuisance part dominates the test statistics asymptotically. Adaptive nonparametric testing is of high importance in practice, but beyond the scope of the current work.

To the best of our knowledge a general approach for testing the quantities in the characteristic triplet of a L\'evy process has not yet been pursued. On the other hand, very profound and interesting estimation, testing and confidence statements have been derived in the literature for a variety of situations. We shall discuss the corresponding literature and their relationships in detail when presenting our results. These discussions also include open problems that seem to be of fundamental importance and show the frontiers of our knowledge on inference for jump processes, while encouraging hopefully further research in this area.

Before treating specific testing problems let us give a very rough guide to what kind of inference is possible via the characteristic function approach. We use standard notation for asymptotic quantities like $a_n=O(b_n)$ or $a_n\lesssim b_n$ if $a_n\le Cb_n$ for some constant $C>0$ independent of $n$ or $X_n=O_P(b_n)$ if $(X_n/b_n)_{n\ge 1}$ is stochastically bounded or tight. Only for heuristics we use  $a_n\gg b_n$ meaning $a_n$ is much larger than $b_n$, mostly $a_n/b_n\to\infty$.

Inference on the characteristic triplet ${\cal T}=(\sigma^2,b,\nu)$ will be based on the empirical characteristic exponent $\hat\psi_n(u):=\Delta_n^{-1}\log(\hat\phi_n(u))$, where the distinguished logarithm is taken, i.e. that branch of the complex logarithm which renders $\hat\psi_n$ continuous with $\hat\psi_n(0)=0$. A linearisation yields $\hat\psi_n(u)-\psi(u)\approx \frac{\hat\phi_n(u)-\phi_n(u)}{\Delta_n\phi_n(u)}$.
The stochastic error or noise level in $\hat\psi_n(u)$ is thus of the order
\[ \frac{\Var_{\C}^{1/2}(\hat\phi_n(u))}{\Delta_n\phi_{0,n}(u)}
=\frac{\sqrt{\abs{\phi_n(u)}^{-2}-1}}{\sqrt{n}\Delta_n}.
\]
We can reliably distinguish between two characteristics $(b_j,\sigma_j^2,\nu_j)$, $j=0,1$, at frequency $u$ if the 'signal' $(\psi_{1}-\psi_{0})(u)$ is significantly larger than the noise level in the observations generated by the characteristics with $j=0$, say, i.e.
\begin{equation}\label{EqSNR1} \abs{(\psi_1-\psi_0)(u)}\gg 
\sqrt{\frac{\abs{\phi_{1,n}(u)}^{-2}-1}
{n\Delta_n^2}}.
\end{equation}
We have for any characteristics $\abs{\phi_n(u)}\gtrsim e^{-\Delta_n c^2u^2}$ for some $c>0$. Because of $\abs{\phi_{0,n}(u)}^{-2}=\exp(2\Delta_n\abs{\Re(\psi_{0}(u))})\approx 1+2\Delta_n\abs{\Re(\psi_{0}(u))}$ for $\abs{\Re(\psi_{0}(u))}\Delta_n\to0$
the requirement \eqref{EqSNR1} thus imposes the restriction
\begin{equation}\label{EqSNR2}
\frac1{\sqrt{n\Delta_n}}\ll \abs{u} < (\sqrt{2}c_n)^{-1}\sqrt{\Delta_n^{-1}\log n}.
\end{equation}
Let us suppose $c_n=c$ to be constant and then consider the two main frequency asymptotics:
\begin{enumerate}
\item (HF-estimation) $\Delta_n=1/n$: the  requirement is $1\ll \abs{u}<(\sqrt{2}c)^{-1}\sqrt{n\log n}$.

\item (LF-estimation) $\Delta_n=1$: the  requirement is $n^{-1/2}\ll \abs{u}< (\sqrt{2}c)^{-1}\sqrt{\log n}$.

\end{enumerate}

These heuristics fit well with the theory for these cases. In the high-frequency case we cannot identify, even for continuous-time observations, the jump measure away from zero (there are only finitely many large jumps occurring), but the volatility and the index of jump activity around zero are identifiable and they determine the behaviour of $\phi_n(u)$ for $\abs{u}\to\infty$. The upper bound for $\abs{u}$ shows that the stochastic error prevents us from using too large frequencies and indicates roughly a $\sqrt{n}$-rate that is at best attainable. In contrast, the low-frequency regime allows asymptotically to identify the entire triplet, but we might face logarithmic rates in $n$. If no Brownian motion is present, i.e. $\sigma=0$, the characteristic function decays less rapidly and faster convergence rates are possible.

\section{Tests on volatility}

\subsection{General idea}

A first important problem is to test the hypothesis $H_0:\sigma^2=\sigma_0^2$ for some fixed volatility $\sigma_0\ge 0$, while the drift $b$ and the jump measure $\nu$ remain largely unspecified. When passing to equivalent measures on path space, the volatility remains invariant and a hypothesis on the volatility in finance is often obtained from data in a risk-neutral setting like option price data.

The important observation now is that $-\frac{\sigma^2}{2}u^2$ is the dominant term in the characteristic exponent $\psi(u)$ for large frequencies $\abs{u}$. If we apply the general signal-to-noise ratio idea \eqref{EqSNR1} to distinguish with some alternative volatility $\sigma_1$, the frequency $U_n>0$ we choose should satisfy
\[ \abs{\sigma_1^2-\sigma_0^2}U_n^2\gg \frac{U_n}{\sqrt{n\Delta_n}}\vee \frac{e^{\Delta_n\sigma_0^2U_n^2/2}}{\sqrt{n}\Delta_n}.
\]
If there were no nuisance induced by the jump measure, we would just optimise over $U_n$ and take $U_n=\sqrt{2\Delta_n^{-1}\sigma_0^{-2}}$, which yields the parametric $n^{-1/2}$-separation rate for $\abs{\sigma_1^2-\sigma_0^2}$.

Genuinely, however, we have to take into account the impact of drift and jumps. Since the volatility only appears in $\Re(\psi(u))$, the drift does not interfere when the test is based on the real part $\Re(\hat\psi_n(u))=(\hat\psi_n(u)+\hat\psi_n(-u))/2$, but the jump measure does, of course. In the spirit of Jacod and Rei{\ss} \cite{JJReiss2012} we classify the jump activity of $\nu$ by a jump index  $\beta\in[0,2]$ of Blumenthal-Getoor type of intensity $R>0$ and consider
\[\nu_0,\nu_1\in BG(\beta,R)=\Big\{\nu\,\Big|\,\int (2^{1-\beta}\abs{x}^\beta\vee x^2)\,\nu(dx)\le R\Big\}.
\]
In terms of the characteristic exponent this assumption implies
$\Re(\psi_j(u)+\tfrac{\sigma_j^2}{2}u^2)\in [-R\abs{u}^\beta,0]$, $j=0,1$, because
\begin{align*}
&\Re\Big(\int (e^{iux}-1-iux)\,\nu_j(dx)\Big)=\int (\cos(ux)-1)\,\nu_j(dx)\\
&\quad\ge -\int \Big(\frac{u^2x^2}{2}\wedge 2\Big)\,\nu_j(dx)\ge -2\int \abs{ux/2}^\beta\nu_j(dx).
\end{align*}
For the separation of the volatilities this additional bias induced by the jump activity thus requires
\[ \abs{\sigma_1^2-\sigma_0^2}U_n^2\gg RU_n^\beta.\]
For this part, the larger $U_n$ the better and considering bias and stochastic part together, we should optimize $U_n$ such that
\[ \max\Big(\frac{U_n^{-2}\exp(\Delta_n(\sigma_0^2U_n^2/2+RU_n^\beta))}{\sqrt{n}\Delta_n},
RU_n^{\beta-2}\Big) \to\text{min!}
\]
Asymptotically for  large $U_n$, $\sigma_0>0$ and $n\Delta_n^{2-\beta}\to\infty$, this gives $U_n=\sigma_0^{-1}\Delta_n^{-1/2}\sqrt{\log(n\Delta_n^{2-\beta})}$ and the separation condition
\[ \abs{\sigma_1^2-\sigma_0^2}\gg \Big(\frac{\Delta_n\sigma_0^2}{\log(n\Delta_n^{2-\beta})}\Big)^{(2-\beta)/2}.
\]
For $n\Delta_n^{2-\beta}\lesssim 1$ the bias bound is asymptotically negligible and we choose
$U_n=\sqrt{2\Delta_n^{-1}\sigma_0^{-2}}$ as in the parametric case. The critical line $n\Delta_n^{2-\beta}=1$ reduces in the high-frequency framework $\Delta_n=\frac1n$ to the case $\beta=1$, where it separates the standard parametric-type bounds for $\beta\le 1$ with the nonparametric bounds for $\beta>1$ \cite{JJReiss2012}. For the low-frequency setting or more generally $\Delta_n\gtrsim n^{-1/2}$ the parametric type never appears.

In the case $\sigma_0=0$, the characteristic exponent under $H_0$ changes and always yields the choice $U_n=(R\Delta_n)^{-1/\beta}\log(\sqrt{n})$ which implies the faster separation rate $\sigma_1^2\gg R^{2\beta}\Delta_n^{(2-\beta)/\beta}(\log n)^{\beta-2}$.
For any $\Delta_n\to 0$ with polynomial rate in $n$ there is a $\beta>0$ such that the separation rate is of super-efficient order $o(n^{-1/2})$, which seems counterintuitive. Yet, note that in the parametric case the testing problem faces the degenerate law $N(0,0)$ under $H_0$ and thus the rate is dictated by the bias part.

\subsection{Test construction and properties}

Let us specify the first nonparametric testing problem more rigorously as
\[ H_0: {\cal T}\in\Theta_0 \text{ versus }  H_1(r_n): {\cal T}\in\Theta_1(r_n)\]
with
\begin{equation}\label{EqThetaVol}
\begin{split}
\Theta_0&=\{(\sigma_0^2,b,\nu)\,|\, b\in\R, \nu\in BG(\beta,R)\},\\ \Theta_1(r_n)&=\{(\sigma^2,b,\nu)\,|\,\abs{\sigma^2-\sigma_0^2}\ge r_n\sigma_0^2, b\in\R, \nu\in BG(\beta,R)\},
\end{split}
\end{equation}
where $\sigma_0^2>0$ and $r_n\downarrow 0$ is a separation rate that tends as fast as possible to zero while the two hypotheses can still be distinguished asymptotically.

The above discussion was based on the characteristic exponent. For a concrete construction of the test and limit theorems the characteristic function itself is more convenient. The variance result for $\hat\phi_n(u)$ implies by the inverse triangle inequality
\[ \E[(\abs{\hat\phi_n(u)}-\abs{\phi_n(u)})^2]\le \E[\abs{\hat\phi_n(u)-\phi_n(u)}^2]\le \frac1n.
\]
For the characteristic functions $\phi_{0,n}$ in our hypothesis $H_0$ we have
\[ e^{-\Delta_n(\frac{\sigma_0^2}{2}U_n^2+RU_n^\beta)}\le\abs{\phi_{0,n}(U_n)}\le e^{-\Delta_n\frac{\sigma_0^2}{2}U_n^2}.
\]
This implies by Chebyshev inequality for any  $\kappa>1$ the conservative, but uniform convergence bound
\begin{align*}
&\limsup_{n\to\infty}\sup_{{\cal T}\in\Theta_0}P_{\cal T}(\abs{\hat\phi_n(U_n)}\notin {\cal A}_{n,\kappa})\le \kappa^{-2}
\text{with }{\cal A}_{n,\kappa}=\Big[e^{-\Delta_n(\frac{\sigma_0^2}{2}U_n^2+RU_n^\beta)}-\frac{\kappa}{\sqrt{n}},
e^{-\Delta_n\frac{\sigma_0^2}{2}U_n^2} +\frac{\kappa}{\sqrt{n}}\Big].
\end{align*}
According to the preceding discussion, fix some small $\delta>0$ and put $\eps=0$ if $\Delta_n\le n^{-\delta}$ and $\eps=3\delta$ otherwise and
then choose the frequency
\begin{equation}\label{EqUVol}
U_n=\sqrt{\frac{(1-\eps)\log(n\Delta_n^{2-\beta})\vee 2}{\sigma_0^2\Delta_n}},
\end{equation}
where the additional factor $1-\eps$ will ensure uniformity over the bias part.
We arrive at the acceptance interval
\begin{align}\label{EqAnVol}
{\cal A}_{n,\kappa}&=\Big[A_n^{1+a_n}-\frac{\kappa}{\sqrt{n}}, A_n+\frac{\kappa}{\sqrt{n}}\Big]\text{ with } A_n=(n\Delta_n^{2-\beta})^{-(1-\eps)/2}\wedge e^{-1},\,a_n=2R\sigma_0^{-2}U_n^{\beta-2}.
\end{align}

\begin{proposition}\label{PropVol}
For the testing problem specified in \eqref{EqThetaVol} with $\sigma_0,\beta,R>0$ and separation rate
\[r_n=2a_n+\frac{\rho_n}{\sqrt{n}}
=4R\sigma_0^{-2}\Big(\frac{\sigma_0^2\Delta_n}{(1-\eps)\log(n\Delta_n^{2-\beta})\vee 2}\Big)^{(2-\beta)/2} +\frac{\rho_n}{\sqrt{n}}\text{ for some }\rho_n\to\infty
\]
the test that accepts if $\abs{\hat\phi_n(U_n)}\in{\cal A}_{n,\kappa}$ with frequency $U_n$ from \eqref{EqUVol} and acceptance interval ${\cal A}_{n,\kappa}$ from \eqref{EqAnVol} has asymptotic level at most $\kappa^{-2}$ uniformly over the hypothesis $H_0$ and asymptotic power 1 over the alternative $H_1(r_n)$.
\end{proposition}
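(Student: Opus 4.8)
The plan is to control both error probabilities through the single variance bound $\E[(\abs{\hat\phi_n(u)}-\abs{\phi_n(u)})^2]\le\Var_{\C}(\hat\phi_n(u))\le 1/n$ recorded before the statement, together with the deterministic two-sided envelope for $\abs{\phi_{0,n}(U_n)}$. First I would record the two algebraic identities built into ${\cal A}_{n,\kappa}$: writing $L_n=-\log A_n=\Delta_n\sigma_0^2U_n^2/2$, the choice \eqref{EqUVol} gives $L_n=\tfrac12((1-\eps)\log(n\Delta_n^{2-\beta})\vee 2)\ge 1$, so $A_n=e^{-\Delta_n\sigma_0^2U_n^2/2}$, and since $L_na_n=\Delta_nRU_n^\beta$ also $A_n^{1+a_n}=e^{-\Delta_n(\sigma_0^2U_n^2/2+RU_n^\beta)}$. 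Hence the envelope $e^{-\Delta_n(\sigma_0^2U_n^2/2+RU_n^\beta)}\le\abs{\phi_{0,n}(U_n)}\le e^{-\Delta_n\sigma_0^2U_n^2/2}$ is exactly $A_n^{1+a_n}\le\abs{\phi_{0,n}(U_n)}\le A_n$, valid uniformly over ${\cal T}\in\Theta_0$ because the drift does not enter $\abs{\phi_n}$ and $\nu\in BG(\beta,R)$ enters only through the stated Blumenthal--Getoor bound. By Chebyshev's inequality the event $\{\,\babs{\abs{\hat\phi_n(U_n)}-\abs{\phi_{0,n}(U_n)}}<\kappa/\sqrt n\,\}$ has probability at least $1-\kappa^{-2}$, and on it $\abs{\hat\phi_n(U_n)}$ lands in $[A_n^{1+a_n}-\kappa/\sqrt n,A_n+\kappa/\sqrt n]={\cal A}_{n,\kappa}$, giving the uniform (indeed non-asymptotic) level bound $\kappa^{-2}$.

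For the power I would split the alternative by the sign of $\sigma^2-\sigma_0^2$ and reduce everything to a deterministic gap condition. If $\sigma^2\ge(1+r_n)\sigma_0^2$, then since jumps only lower the modulus, $\abs{\phi_n(U_n)}\le e^{-\Delta_n\sigma^2U_n^2/2}\le A_n^{1+r_n}$; if $\sigma^2\le(1-r_n)\sigma_0^2$, the Blumenthal--Getoor bound gives $\abs{\phi_n(U_n)}\ge e^{-\Delta_n((1-r_n)\sigma_0^2U_n^2/2+RU_n^\beta)}=A_n^{1-r_n+a_n}$, both uniformly over $b$ and $\nu\in BG(\beta,R)$. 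Writing $D_n=\abs{\hat\phi_n(U_n)}-\abs{\phi_n(U_n)}$ with $\E[D_n^2]\le1/n$, the test rejects in the first case as soon as $D_n<G_n^{(1)}-\kappa/\sqrt n$ with $G_n^{(1)}=A_n^{1+a_n}-A_n^{1+r_n}$, and in the second as soon as $D_n>-(G_n^{(2)}-\kappa/\sqrt n)$ with $G_n^{(2)}=A_n^{1-r_n+a_n}-A_n$. By Chebyshev the rejection probability is then at least $1-(n(G_n^{(j)}-\kappa/\sqrt n)^2)^{-1}$, so it suffices to prove $\sqrt n\,G_n^{(j)}\to\infty$ for $j=1,2$, and the bounds above make this automatically uniform over the respective halves of the alternative.

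The core of the argument, and the step I expect to be most delicate, is the uniform lower bound $\sqrt n\,G_n^{(j)}\to\infty$ across the full range $\Delta_n\in[\tfrac1n,1]$ and $\beta\in(0,2)$, where $A_n$ and $a_n$ change character along the critical line $n\Delta_n^{2-\beta}=1$. The decomposition $r_n-a_n=a_n+\rho_n/\sqrt n$ is what drives it: the doubling $2a_n$ produces an exponent gap of at least $a_n$ beyond the $H_0$-envelope $[A_n^{1+a_n},A_n]$, while $\rho_n/\sqrt n$ supplies a stochastic margin. With $L_n=-\log A_n$ and $L_na_n=\Delta_nRU_n^\beta$, the convexity bounds $1-e^{-x}\ge\tfrac12(x\wedge1)$ and $e^{x}-1\ge x$ give $G_n^{(1)}\ge A_n^{1+a_n}\bigl(1-e^{-L_n(a_n+\rho_n/\sqrt n)}\bigr)$ and $G_n^{(2)}\ge A_nL_n(a_n+\rho_n/\sqrt n)$. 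In the parametric regime $n\Delta_n^{2-\beta}\lesssim1$ one has $\eps=0$, $A_n=e^{-1}$, $L_n=1$ and $a_n=o(n^{-1/2})$, so the bias part is negligible and the $\rho_n/\sqrt n$ part alone yields $G_n^{(j)}\gtrsim e^{-1}\rho_n/\sqrt n$, whence $\sqrt n\,G_n^{(j)}\gtrsim\rho_n\to\infty$ for any $\rho_n\to\infty$. In the nonparametric regime $n\Delta_n^{2-\beta}\to\infty$ the bias part dominates and both gaps reduce to controlling $\sqrt n\,A_n\Delta_nRU_n^\beta=Rn^{\eps/2}\Delta_n^{(2-\beta)\eps/2}\bigl((1-\eps)\sigma_0^{-2}\log(n\Delta_n^{2-\beta})\bigr)^{\beta/2}$ (for $j=1$ one first splits according to whether $L_na_n$ stays bounded or diverges). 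This is where the factor $1-\eps$ earns its keep: for $\Delta_n\le n^{-\delta}$ one has $\eps=0$ and the logarithmic factor diverges, while for $\Delta_n>n^{-\delta}$ the chosen $\eps=3\delta>0$ forces $n^{\eps/2}\Delta_n^{(2-\beta)\eps/2}\ge n^{\eps(1-\delta(2-\beta))/2}\to\infty$; the restriction $\beta<2$ ensures $a_n\to0$ and keeps $e^{-\Delta_nRU_n^\beta}$ subpolynomial, so that $\sqrt n\,A_n^{1+a_n}\to\infty$ as well. Assembling the two cases across both regimes yields asymptotic power $1$ uniformly over $H_1(r_n)$.
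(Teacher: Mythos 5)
Your proposal is correct and follows essentially the same route as the paper: the level bound via the Chebyshev/variance argument combined with the two-sided envelope $A_n^{1+a_n}\le\abs{\phi_{0,n}(U_n)}\le A_n$, and the power via the deterministic gaps $\sqrt{n}(A_n^{1+a_n}-A_n^{1+r_n})\to\infty$ and $\sqrt{n}(A_n^{1-r_n+a_n}-A_n)\to\infty$, established by elementary convexity bounds (the paper uses $x^p-1\ge p\log x$ where you use $1-e^{-x}\ge\tfrac12(x\wedge1)$ and $e^x-1\ge x$) together with the same case analysis in which the factor $1-\eps$ secures the polynomial margin $n^{\eps/2-\delta}$ when $\Delta_n>n^{-\delta}$. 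Your packaging is marginally more careful in one spot—factoring out $A_n^{1+a_n}$ rather than $A_n^{1+r_n}$ avoids the paper's side condition $A_n^{r_n}\to1$—but this is a cosmetic variation, not a different argument.
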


\begin{proof}
It remains to consider the asymptotics under the alternative. In the case $\eps>0$ we have $A_n\ge n^{-(1-\eps)/2}$, in the case $\Delta_n\le n^{-\delta}$ we have $A_n\ge n^{-(1-(2-\beta)\delta)/2}$, whence from $a_n\to 0$ we infer $\sqrt{n}A_n^{1+a_n}\to\infty$ and the left endpoint of ${\cal A}_{n,\kappa}$ tends to infinity.  Consider now a triplet from the alternative with $\sigma_1^2\ge \sigma_0^2(1+r_n)$ such that its characteristic function satisfies
\[\abs{\phi_{1,n}(U_n)}\le e^{-\Delta_n\frac{\sigma_1^2}{2}U_n^2}=A_n^{1+r_n}.
\]
In the case $\Delta_n>n^{-\delta}$ we have $a_n\gtrsim \Delta_n>n^{-\delta}$ and thus $\sqrt{n}A_n^{1+r_n} r_n\gtrsim n^{\eps/2-\delta-o(1)}\to\infty$ since $\eps/2-\delta>0$. In the case $\Delta_n\le n^{-\delta}$ ($\eps=0$) we find
\begin{align*}
&A_n\sqrt{n}(r_n-a_n)\log(A_n^{-1})\\
&\qquad\gtrsim (n\Delta_n^{2-\beta}\vee 1)^{-1/2}((n\Delta_n^{-(\beta-2)})^{1/2}(\log(n\Delta_n^{2-\beta})\vee 1)^{(\beta-2)/2}+\rho_n)(\log(n\Delta_n^{2-\beta})\vee 1)\\
&\qquad\thicksim (\log(n\Delta_n^{2-\beta}))^{\beta/2}+(n\Delta_n^{2-\beta})^{-1/2} \rho_n\to\infty.
\end{align*}
Furthermore, note $A_n^{r_n}\to 1$ for $\log r_n\lesssim -\log n$ (satisfied in case $\eps=0$). Putting these asymptotics together,
the distance to the acceptance interval, weighted by $\sqrt{n}$, tends to infinity using $x^p-1\ge p\log(x)$ for $x,p>0$:
\begin{align*}
\sqrt{n}(A_n^{1+a_n}-A_n^{1+r_n})&\ge \sqrt{n}A_n^{1+r_n}(r_n-a_n)\log(A_n^{-1})
\to \infty.
\end{align*}
A completely symmetric analysis for a triplet from the alternative with $\sigma_1^2\le \sigma_0^2(1-r_n)$ yields the same order for the distance to the acceptance interval. Consequently, we have
\[ \lim_{n\to\infty}\inf_{{\cal T}\in\Theta_1(r_n)}P_{\cal T}(\abs{\hat\phi_n(U_n)}\notin {\cal A}_{n,\kappa})=1
\]
and the power of the test on the alternative $H_1$ uniformly tends to one.
\end{proof}

For $\sigma_0=0$ we change the hypotheses by setting
\begin{equation}\label{EqThetaVol0} \Theta_0=\{(0,b,\nu)\,|\, b\in\R, \nu\in BG(\beta,R)\},\quad \Theta_1(r_n)=\{(\sigma^2,b,\nu)\,|\,\sigma^2\ge r_n, b\in\R, \nu\in BG(\beta,R)\}
\end{equation}
and we consider
\begin{equation}\label{EqUAnVol0}
U_n=(R\Delta_n)^{-1/\beta}(\log(\sqrt{n}/(2\kappa)))^{1/\beta},\quad {\cal A}_{n,\kappa}=[\kappa n^{-1/2}, \infty).
\end{equation}

\begin{proposition}
For the testing problem specified in \eqref{EqThetaVol0} with $\beta>0$ and any separation rate $r_n\to 0$ satisfying
\[ r_n-(2R)^{2/\beta}\Big(\frac{\Delta_n}{\log n}\Big)^{(2-\beta)/\beta}\to \infty \]
the test that accepts if $\abs{\hat\phi_n(U_n)}\in{\cal A}_{n,\kappa}$ with $U_n$ and ${\cal A}_{n,\kappa}$ from \eqref{EqUAnVol0} has asymptotic level $\kappa^{-2}$ uniformly over the hypothesis $H_0$ and asymptotic power 1 over the alternative $H_1(r_n)$.
\end{proposition}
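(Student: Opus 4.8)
The plan is to mirror the proof of Proposition~\ref{PropVol}, working throughout with the scalar statistic $\abs{\hat\phi_n(U_n)}$ and the uniform second-moment bound $\E[(\abs{\hat\phi_n(u)}-\abs{\phi_n(u)})^2]\le\frac1n$ established above. First I would record that $U_n$ in \eqref{EqUAnVol0} is calibrated precisely so that $\Delta_n R U_n^\beta=\log(\sqrt n/(2\kappa))$, hence $e^{-\Delta_n R U_n^\beta}=2\kappa/\sqrt n$. For $\sigma_0=0$ the $BG(\beta,R)$ inequality $\Re(\psi_0(U_n))\ge-RU_n^\beta$ derived above yields the uniform lower bound $\abs{\phi_{0,n}(U_n)}\ge e^{-\Delta_n R U_n^\beta}=2\kappa/\sqrt n$ for every $\mathcal T\in\Theta_0$. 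Since $\mathcal A_{n,\kappa}$ starts at $\kappa/\sqrt n$, the event $\{\abs{\hat\phi_n(U_n)}<\kappa/\sqrt n\}$ forces $\abs{\abs{\hat\phi_n(U_n)}-\abs{\phi_{0,n}(U_n)}}>\kappa/\sqrt n$, and Chebyshev's inequality gives
\[ \sup_{\mathcal T\in\Theta_0}P_{\mathcal T}\big(\abs{\hat\phi_n(U_n)}<\kappa/\sqrt n\big)\le\frac{1/n}{(\kappa/\sqrt n)^2}=\kappa^{-2}, \]
the claimed uniform asymptotic level.

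\textbf{Power, key estimate.} Under the alternative the jump part of $\Re\psi$ is nonpositive, so $\Re(\psi_1(U_n))\le-\frac{\sigma^2}2U_n^2$ and $\abs{\phi_{1,n}(U_n)}\le e^{-\Delta_n\sigma^2U_n^2/2}$. Inserting $U_n$ and using $\sigma^2\ge r_n$ I would compute
\[ \Delta_n\tfrac{\sigma^2}2U_n^2\ge\tfrac{r_n}2R^{-2/\beta}\Delta_n^{(\beta-2)/\beta}\big(\log(\sqrt n/(2\kappa))\big)^{2/\beta}. \]
The crucial cancellation is that the separation hypothesis forces $r_n$ to exceed $(2R)^{2/\beta}(\Delta_n/\log n)^{(2-\beta)/\beta}$ by a diverging factor $c_n\to\infty$; substituting this, the powers of $\Delta_n$ cancel (the exponent $(2-\beta)/\beta$ hidden in $r_n$ against $(\beta-2)/\beta$ from $U_n^2$) and, with $\log(\sqrt n/(2\kappa))\sim\frac12\log n$, the right-hand side collapses to $\tfrac{c_n}2\log n\,(1+o(1))$. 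Hence $\abs{\phi_{1,n}(U_n)}\le n^{-c_n/2}$, so that $\sqrt n\,\abs{\phi_{1,n}(U_n)}\to0$ uniformly over $\Theta_1(r_n)$. This computation, which exploits the altered shape of the characteristic exponent at $\sigma_0=0$, is the heart of the faster separation rate.

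\textbf{Power, conclusion.} The mean of $\hat\phi_n(U_n)$ thus lies strictly below the noise floor $n^{-1/2}$, and Chebyshev's inequality gives, uniformly over $\Theta_1(r_n)$,
\[ P_{\mathcal T}\big(\abs{\hat\phi_n(U_n)}\ge\kappa/\sqrt n\big)\le\frac{1/n}{\big(\kappa/\sqrt n-\abs{\phi_{1,n}(U_n)}\big)^2}=\kappa^{-2}(1+o(1)). \]
To turn this into genuine asymptotic power one I would let the critical level diverge, $\kappa=\kappa_n\to\infty$ arbitrarily slowly (e.g.\ $\kappa_n=\log\log n$): the calibration and the key estimate are unaffected since $\log(\sqrt n/(2\kappa_n))\sim\frac12\log n$ and $\abs{\phi_{1,n}(U_n)}=o(\kappa_n/\sqrt n)$ still, the uniform level bound stays $\kappa_n^{-2}\to0$, and the displayed type-II bound becomes $\kappa_n^{-2}(1+o(1))\to0$. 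Consequently $\inf_{\mathcal T\in\Theta_1(r_n)}P_{\mathcal T}(\abs{\hat\phi_n(U_n)}\notin\mathcal A_{n,\kappa_n})\to1$, i.e.\ the power tends to one uniformly over the alternative.

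\textbf{Main obstacle.} The hard part is precisely this last step, and it is where the proposition differs structurally from Proposition~\ref{PropVol}. There the alternative opens a gap of order $\gg n^{-1/2}$ to the acceptance boundary, so the $O_P(n^{-1/2})$ fluctuation of $\hat\phi_n$ is automatically dominated and a fixed $\kappa$ already yields power one. Here, by contrast, the hardest null (maximal jump activity) pushes $\abs{\phi_{0,n}(U_n)}$ all the way down to the noise floor $2\kappa n^{-1/2}$, so the boundary sits at $\kappa n^{-1/2}$, exactly the stochastic scale of $\hat\phi_n(U_n)$ (variance $\sim\frac1n$). The exponential smallness of $\abs{\phi_{1,n}(U_n)}$ alone cannot beat the fluctuation at a fixed threshold---this is the degeneracy foreshadowed by the $N(0,0)$ remark for $\sigma_0=0$ in Section 2.1---and it is the diverging critical level that restores power one: here the divergence must enter the test itself, rather than merely the separation rate as with the $\rho_n\to\infty$ term in Proposition~\ref{PropVol}. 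The remaining care is to check that $\sqrt n\,\abs{\phi_{1,n}(U_n)}\to0$ holds uniformly both over the nuisance class $BG(\beta,R)$ and across the whole range $\Delta_n\in[\frac1n,1]$, which is exactly what the $\Delta_n$-cancellation in the separation condition secures.
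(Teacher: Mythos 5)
Your level argument and the key power estimate coincide exactly with the paper's proof: the calibration $e^{-\Delta_n R U_n^\beta}=2\kappa n^{-1/2}$ plus Chebyshev with the uniform bound $\E[(\abs{\hat\phi_n(u)}-\abs{\phi_n(u)})^2]\le\frac1n$ gives the level $\kappa^{-2}$, and the computation showing $\sqrt{n}\,\abs{\phi_{1,n}(U_n)}\le e^{(\log n-\Delta_n r_n U_n^2)/2}\to 0$ is precisely the paper's one-line bound under the alternative (both of you must in fact read the displayed separation condition non-literally, since for $r_n\to 0$ the stated difference cannot tend to infinity; the paper's proof effectively uses $\Delta_n r_n U_n^2-\log n\to\infty$, which your multiplicative reading $r_n=c_n(2R)^{2/\beta}(\Delta_n/\log n)^{(2-\beta)/\beta}$, $c_n\to\infty$, implies). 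Where you genuinely depart is the final step. The paper simply says ``we conclude again by applying the uniform deviation bound'' and asserts power 1; as you correctly observe, at fixed $\kappa$ that bound only yields asymptotic type-II error at most $\kappa^{-2}$, and the central limit theorem of Section 2.3 (with $\Sigma_n\to\frac12 I$ since $\phi_{1,n}(U_n)\to0$) shows the limiting type-II error is genuinely positive, of order $e^{-\kappa^2}$: the acceptance boundary $\kappa n^{-1/2}$ sits exactly at the stochastic scale of $\abs{\hat\phi_n(U_n)}$, unlike in Proposition \ref{PropVol} where the gap to the boundary is $\gg n^{-1/2}$. Your repair --- letting $\kappa=\kappa_n\to\infty$ slowly, checking that the calibration is undisturbed because $\log(\sqrt n/(2\kappa_n))\sim\frac12\log n$ and that $\abs{\phi_{1,n}(U_n)}=o(\kappa_n n^{-1/2})$ still holds --- is correct and restores uniform power 1 (while driving the level to 0), in the same spirit as the $\rho_n\to\infty$ inflation in Proposition \ref{PropVol}. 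So your proof is a careful version of the paper's argument that closes a step the paper glosses over: as literally written, with fixed $\kappa$ the paper's proof establishes power $1-\kappa^{-2}$, not power 1, and your diagnosis that the divergence must enter the critical value itself is the right way to make the stated conclusion true.
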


\begin{proof}
Under the hypothesis $H_0$ we just use the bias bound
\[ \abs{\phi_{0,n}(U_n)}\ge e^{-\Delta_nRU_n^\beta}=2\kappa n^{-1/2}
\]
and the same deviation bound (by Chebyshev's inequality) as before to attain the uniform level. Under the alternative we have
\[ \sqrt{n}\abs{\phi_{1,n}(U_n)}\le e^{(\log n-\Delta_nr_nU_n^2)/2}\to 0\]
and we conclude again by applying the uniform deviation bound.
\end{proof}

\subsection{Discussion and extensions}

Due to the semi-parametric feature of the testing problem the asymptotic level of the test is given by the limit of the worst case error probability of the first kind over $H_0$ (in this order!). If for each $n$ only a single hypothesis is tested against a single alternative, then a parametric $n^{-1/2}$-separation rate can be achieved. This is discussed in detail by Ait-Sahalia and Jacod \cite{ASJJ2007} where even the exact LAN-property is derived and instead of Brownian motion also any stable process can be taken that dominates the remaining jump part. For their rather involved estimation procedure the authors then also prove a nonparametric $n^{-1/2}\wedge \Delta_n^{(2-\beta)/2}$-rate, which is up to the loss of the logarithmic factor exactly our separation rate $r_n$. As discussed in Jacod and Rei{\ss} \cite{JJReiss2012} for the high-frequency case, this logarithmic gain is possible also for estimation, the estimator simply being
\[ \hat\sigma_n^2:=-2\Re(\log(\hat\phi_n(U_n)))\text{ with our choice of }U_n.\]
The rate-optimality of this estimator has been established and the lower bound proof directly delivers the lower bound for our separation rate in Proposition \ref{PropVol}. Testing the hypothesis $H_0:\sigma^2=0$ has been studied by Ait-Sahalia and Jacod \cite{ASJJ2010} in a much wider semi-martingale setting, but reduced to the L\'evy case their results are  less precise than ours. It would be interesting to study the super-efficiency of rates $o(n^{-1/2})$ for less restrictive models than pure L\'evy processes, e.g. allowing for time-varying, but deterministic characteristics.

Another interesting observation is that the Blumenthal-Getoor-type index $\beta$ is only used to guarantee the order $\abs{U_n}^\beta$ for the bias term in the characteristic exponent. In the case of finite intensity and a jump density (with respect to Lebesgue measure) $\nu$, the Fourier transform satisfies $\abs{{\cal F}\nu(U_n)}=O(U_n^{-s})$ whenever $\nu\in C^s(\R)$ with integrable derivatives of all orders. In that case the test statistics should be based upon $\int_0^1\Re(\hat\psi_n(uU_n))w(u)du$ with a weight function $w$ satisfying $\int_0^1 u^2w(u)du=2$ (to recover $\sigma^2$), $\int_0^1 w(u)du=0$ (to filter out $\lambda=\nu(\R)$) and $\sup_{u\in[0,1]}u^{-s}\abs{w(u)}<\infty$ (to ensure $\int \abs{uU_n}^{-s}w(u)du=O(U_n^{-s})$). This approach ensures that the bias is of order $O(U_n^{-s})$ while the stochastic error is not increased since only frequencies up to $U_n$ enter. Testing and estimation of $\sigma$ (in the case $\sigma>0$) is therefore possible with rate $(\frac{\Delta_n}{\log(n\Delta_n)})^{(2+s)/2}\vee \frac{1}{\sqrt{n}}$. This is particularly interesting in the low-frequency regime $\Delta_n=1$ with rate $(\log n)^{-(s+2)/2}$. In the related setting of estimation from financial option data the weight function approach has been successfully analysed and applied by Belomestny and Rei{\ss} \cite{BelReiss2006}. S{\"o}hl \cite{Soehl2012} addresses testing and confidence regions for this situation in detail and S{\"o}hl and Trabs \cite{SoTra2012} apply this to financial data. Generalisations to other processes defined by their Fourier transform are possible, e.g. see Belomestny \cite{Bel2011} for estimation in affine processes.

An accurate choice of the critical value $\kappa$ to attain a certain asymptotic level should be based on a uniform central limit theorem. For empirical characteristic functions $\hat\phi_n$ and corresponding characteristic functions $\phi_n$  a central limit theorem for triangular schemes yields
\[ \sqrt{n}\Sigma_n^{-1/2}(\Re(\hat\phi_n(U_n)-\phi_n(U_n)),\Im(\hat\phi_n(U_n)-\phi_n(U_n)))\to N_{\R^2}(0,I)
\]
with $\Sigma_n\in\R^{2\times 2}$ equal to
\[\frac12\begin{pmatrix}1+\Re(\phi_n(2U_n))-2\Re(\phi_n(U_n))^2 & \Im(\phi_n(2U_n))-2\Re(\phi_n(U_n))\Im(\phi_n(U_n))\\
\Im(\phi_n(2U_n))-2\Re(\phi_n(U_n))\Im(\phi_n(U_n)) & 1-\Re(\phi_n(2U_n))-2\Im(\phi_n(U_n))^2\end{pmatrix}.
\]
The form of $\Sigma_n$ can be easily checked using trigonometric identities, e.g. $\E[\cos(uX_k)^2]=\frac12(1+\E[\cos(2uX_k)])$ for the variance of the real part.
In the case $n\Delta_n^{2-\beta}\to\infty$ both $\phi_n(U_n),\phi_n(2U_n)$ tend to zero whereas $n\Delta_n^{2-\beta}\to 0$ implies $\phi_n(U_n)\to e^{-1}$, $\phi_n(2U_n)\to e^{-4}$
(we omit the intermediate case). We apply the $\Delta$-method for the mapping $(x,y)\mapsto \sqrt{x^2+y^2}$ and conclude for our setting of testing under $H_0:\sigma^2=\sigma_0^2>0$
\[ \sqrt{n}(\abs{\hat\phi_n(U_n)}-\abs{\phi_n(U_n)})\to N_{\R}(0,v^2) \text{ with } v=\begin{cases} 1/2,&\text{ if }n\Delta_n^{2-\beta}\to\infty,\\
(1-e^{-2})^2/2,&\text{ if }n\Delta_n^{2-\beta}\to 0.\end{cases}
\]

To attain an asymptotic level $\alpha\in(0,1)$ we should therefore choose $\kappa=vq_{1-\alpha/2}$ with the $(1-\alpha/2)$-quantile of $N(0,1)$. Remark that based upon $\abs{\hat\phi_n(U_n)}$ the test is asymptotically distribution free for the composite hypothesis $H_0$. Let us recall that uniformity in the central limit theorem over the null hypothesis is not established here.

For general semi-martingale models our simple characteristic function approach does not allow inference on the integrated volatility $\int_0^1\sigma_t^2dt$, assuming here $\Delta=\frac1n$ for simplicity. The empirical characteristic function of the increments, however, still estimates the realized Laplace transform $\int_0^1e^{-u^2\sigma_t^2/2}dt$ for stochastic volatility $\sigma_t^2$. Tauchen and Todorov \cite{TT2012}
provide a uniform (in $u$) central limit theorem for this case under jumps of bounded variation and discuss the potential of this Laplace transform approach for applications. For integrated volatility inference a direct extension of the present method is possible if the local characteristics of the semi-martingale vary smoothly and we localize the test statistics on small blocks.

In the area of high-frequency financial statistics power variation methods are traditionally used, comparing discrete $p$-variations for continuous martingales and jump processes. It is quite striking that a slightly smaller choice of the frequency $U_n$ transforms our test statistics to a truncated quadratic variation statistics.
If $\delta_n:=U_n^2\Delta_n\to 0$, we have for any $L^2$-It\^o-semimartingale $X$ with bounded characteristics
\begin{align*}
&\hat\phi_n(U_n)-1 = \frac1n\sum_{k=1}^n e^{iU_n\Delta_k^nX}({\bf 1}_{\{\abs{\Delta_k^nX}\le\delta_n^{1/4}U_n^{-1}\}}+{\bf 1}_{\{\abs{\Delta_k^nX}>\delta_n^{1/4}U_n^{-1}\}})-1\\
&=\frac1n\Big(\sum_{k=1}^n (iU_n\Delta_k^nX-\tfrac{U_n^2}{2}(\Delta_k^nX)^2){\bf 1}_{\{\abs{\Delta_k^nX}\le\delta_n^{1/4}U_n^{-1}\}}\Big)+O(\delta_n^{1/2})
+\frac1n\#\{k:\abs{\Delta_k^nX}>\delta_n^{1/4}U_n^{-1}\}\\
&=\frac{iU_n}{n}\sum_{k=1}^n \Delta_k^nX {\bf 1}_{\{\abs{\Delta_k^nX}\le\delta_n^{1/4}U_n^{-1}\}} -\frac{U_n^2}{2n}\sum_{k=1}^n (\Delta_k^nX)^2{\bf 1}_{\{\abs{\Delta_k^nX}\le\delta_n^{1/4}U_n^{-1}\}}+O_P(\delta_n^{1/2}
+\Delta_n\delta_n^{-1/2}U_n^{2})\\
&=\frac{iU_n}{n}\sum_{k=1}^n \Delta_k^nX{\bf 1}_{\{\abs{\Delta_k^nX}\le\delta_n^{1/4}U_n^{-1}\}} -\frac{U_n^2}{2n}\sum_{k=1}^n (\Delta_k^nX)^2{\bf 1}_{\{\abs{\Delta_k^nX}\le\delta_n^{1/4}U_n^{-1}\}}+O_P(\delta_n^{1/2}).
\end{align*}
Because of $\sum_{k=1}^n(\Delta_k^nX)^2=O_P(n\Delta_n)$ this implies
\[\abs{\hat\phi_n(U_n)}^2=1-\frac{U_n^2}{n}\sum_{k=1}^n (\Delta_k^nX)^2{\bf 1}_{\{\abs{\Delta_k^nX}\le\delta_n^{1/4}U_n^{-1}\}}+\frac{U_n^2}{n^2}\Big(\sum_{k=1}^n \Delta_k^nX{\bf 1}_{\{\abs{\Delta_k^nX}\le\delta_n^{1/4}U_n^{-1}\}}\Big)^2+o_P(1).
\]
The second sum, which is not familiar for standard high-frequency variation estimation, is just an (asymptotically negligible) bias correction. Note that for inference purposes the size of the approximation error is essential and the order $O_P(\delta_n^{1/2})=o_P(1)$ will often be sub-optimal and a more subtle argument than a second order Taylor expansion should be used. Still, the derivation shows conceptually that we recover the truncated realized variance estimator when we choose frequencies $U_n=o(\Delta_n^{-1/2})$, which correspond exactly to standard truncation levels of larger order than $n^{-1/2}$ in the high-frequency setting, see e.g. Ait-Sahalia and Jacod \cite{ASJJ2007}. Let us close this section by formulating two open problems that show our lack of fundamental understanding when passing from the pure L\'evy to the general semi-martingale setting.


\begin{problem}
Prove or disprove that in the case of semi-martingales with $\Delta_n=\frac1n$ and $\beta=1$ (bounded variation of the jump part) the integrated volatility $\int_0^1\sigma_t^2dt$ can be tested or estimated at the parametric rate $n^{-1/2}$, as is the case for L\'evy processes or for semimartingales with $\beta<1$.
\end{problem}

\begin{problem}
Assume that the observations are $Y_k=X_{k\Delta_n}+\eps_k$ with noise variables $\eps_k\sim N(0,1)$ i.i.d. independent from $X$. Find the optimal separation rates for testing based on these noisy observations.
\end{problem}

The first problem seems to be completely open and quite fundamental.
The second problem is of high interest for financial statistics under microstructure noise assumptions and even for $b=0$, $\nu=0$ the optimal rate is $n^{-1/4}$ in the case $\Delta_n=\frac1n$ (Gloter and Jacod \cite{GloterJJ2001}). The combination of microstructure noise and nuisance by jumps for inference on the integrated volatility is a very active area of current research and definite answers even for the simple L\'evy case still lack.

\section{Testing indices of jump activity}

Another stream of recent research has concentrated on the estimation of the Blumenthal-Getoor index or other jump activity indices for L\'evy processes or more general semi-martingales. The rationale for infinite activity measures like the Blumenthal-Getoor index is again that these indices remain invariant when changing to equivalent measures and they can be recovered already in the asymptotics of a high-frequency setting on a bounded observation interval. The basic idea is that these indices dictate the behaviour of $\phi_n(U_n)$ for large frequencies $U_n$. In contrast to inference on the volatility, however, two sources of nuisance enter: the terms due to volatility and due to the general form of the jump measure. We concentrate on two prototypical testing problems in this context.

\subsection{Testing the presence of jumps}

The easiest inference question is to test $H_0:\nu=0$ against the presence of a jump measure. We quantify the contribution of $\nu$ in the alternative by $H_1:\int x^4\nu(dx) \in [r_n,\infty]$ with $\sigma^8+\int x^8\nu(dx)\le R^8$ for some $R>0$ and otherwise arbitrary $\sigma^2\ge 0$ and $b\in\R$. In order to be able to separate the alternative even for finite jump activity $\nu(\R)<\infty$, we suppose $n\Delta_n\to\infty$. At first sight this problem requires the characteristic function approach, but surprisingly it is solved very easily by a direct moment method.  The formulation of the alternative is tailor-made for our purposes. For general characteristics the following curtosis equation follows directly from $(\log\phi_n)^{(4)}(0)=\Delta_n\int x^4\nu(dx)$ if that expression is finite:
\begin{equation}\label{EqCurtosis} \E[(\Delta_k^nX-\E[\Delta_k^nX])^4]- 3\E[(\Delta_k^nX-\E[\Delta_k^nX])^2]^2=\Delta_n\int x^4\nu(dx)\ge 0.
\end{equation}
Consequently, the corresponding empirical moments  have a stochastic error of order $O_P(n^{-1/2}(\Delta_n\int x^8\nu(dx)+\Delta_n^4(\sigma^2+\int x^2\nu(dx))^2))$. The latter is always of order $O_P(R^4n^{-1/2}\Delta_n^{1/2})$, but note that the error level under the hypothesis for $\Delta_n<1$ is much smaller than under the alternative. The test  accepts $H_0$ if the empirical moment in \eqref{EqCurtosis} is within $[-\kappa R^4\Delta_n^2,\kappa R^4\Delta_n^2]$  for some fixed critical value $\kappa>0$.  The separation rate is $\Delta_n^{-1}n^{-1/2}\Delta_n^{1/2}$ in the sense that for $r_n(n\Delta_n)^{1/2}\to\infty$ it has asymptotic power 1 on the alternative.

To see that the separation rate $(n\Delta_n)^{-1/2}$ is minimax optimal, just consider the test between the two single hypotheses $H_0:\sigma=0,b=0,\nu=0$
(process is constant zero) and $H_1:\sigma=0,b=0,\nu=(n\Delta_n)^{-1}\delta_{(n\Delta_n)^{1/8}}$ ($(n\Delta_n)^{1/8}$ times a Poisson process of intensity $(n\Delta_n)^{-1}$). Then with probability $e^{-1}$ under the alternative the process remains equal to zero on $[0,n\Delta_n]$ and thus no test can distinguish between $H_0$ and $H_1$. The alternative jump measure obviously satisfies $\int x^4\nu(dx)=(n\Delta_n)^{-1/2}$ as well as $\int x^8\nu(dx)=1$. Any test of level less than $e^{-1}$ therefore must have power less than 1 (both uniformly in $n$).

The lower bound argument with point masses that shift to infinity also shows that the specification of the alternative has a great impact on the optimal solution. If the alternatives separate like $\nu(\R)\in[r_n,\infty]$ or $\int x^2\nu(dx)\in[r_n,\infty]$, they do not separate at all from $H_0$ since infinitely divisible distributions whose L\'evy measures satisfy $x^2\nu_n(dx)\to\delta_0(dx)$ weakly converge to a normal distribution and thus Brownian motion cannot be separated from pure jump processes with $\int (x^2\wedge 1)\nu(dx)=1$. Let us refer to Neumann and Rei{\ss} \cite{NeuReiss2009} for a similar argument in total variation distance and the implication that (honest) testing between general classes of pure jump processes and diffusion processes is not possible, even in a high-frequency setting.

\subsection{Testing an index of Blumenthal-Getoor type}

Informally we want to test the hypothesis $H_0$ that the Blumenthal-Getoor index $\beta$ of the jump part equals $\beta_0\in[0,2)$ against the alternative $H_1$ that $\abs{\beta-\beta_0}\ge r_n$ for unspecified drift $b$ and volatility $\sigma^2\ge 0$. For stable and tempered stable processes it is known that the Blumenthal-Getoor index exactly describes the growth behaviour $\abs{U_n}^\beta$ for the jump part of the characteristic exponent as the frequency $\abs{U_n}$ tends to infinity. Pragmatically, we just generalise further to all L\'evy processes such that the characteristic exponent for frequencies $U_n\to\infty$ satisfies under $H_0$
\[ \Re(\psi(U_n))=-\frac{\sigma^2}{2}U_n^2-R\abs{U_n}^{\beta_0}-\tau(U_n)\text{ with } \abs{\tau(u)}\le R'(1+\abs{u}^{\beta'}),
\]
where we suppose that the remainder $\tau(u)$ is for large frequencies of lower order, that is $\beta'<\beta$, cf. Belomestny \cite{Bel2010} for a broader discussion of this assumption. The index $\beta_0$ appears in the second-largest term and we have to filter out the quadratic term in the case $\sigma^2>0$. As mentioned earlier, filtering can be achieved by considering a weighted mean over $\psi$. To this end, we consider general finite weighting measures $w$ on the Borel sets of $[0,1]$ with $\int_0^1 u^2w(du)=0$ and $W(\beta_0):=\int_0^1 u^{\beta_0}w(du)>0$. The weighted characteristic exponent over $[0,U_n]$ then satisfies
\[ \int_0^1 \Re(\psi(U_nu))\,w(du)=-RW(\beta_0)U_n^{\beta_0}-\int_0^1\tau(U_nu)\,w(du).
\]
We have $\abs{\int_0^1\tau(U_nu)\,w(du)}\le R'(1+U_n^{\beta'})\norm{w}_{TV}$ and thus the term of interest  dominates. Our test should be agnostic with respect to the additional parameter $R$. Belomestny \cite{Bel2010} uses the natural trick to consider the logarithm in which case $\log(\int_0^1 \Re(\psi(U_nu))\,w(du))=\beta_0\log(U_n)+O(1)$ holds. For his low-frequency situation the only logarithmically  smaller bias term does not harm, but for our general sampling scheme this is highly suboptimal. A better idea is to use two weighting measures $w_1,w_2$ with the properties of $w$ above and with different values $W_1(\beta_0)\not=W_2(\beta_0)$ such that
\[ \frac{\int_0^1 \Re(\psi(U_nu))\,w_1(du)}{\int_0^1 \Re(\psi(U_nu))\,w_2(du)}=\frac{W_1(\beta_0)+O(R'R^{-1}U_n^{\beta'-\beta_0})}
{W_2(\beta_0)+O(R'R^{-1}U_n^{\beta'-\beta_0})}.
\]
In terms of $\Re(\hat\psi(u))=\Delta^{-1}\log(\abs{\hat\phi_n(u)})$ our test statistics is therefore
\[ T_n:=\frac{\int_0^1 \Re(\hat\psi(U_nu))\,w_1(du)}{\int_0^1 \Re(\hat\psi_n(U_nu))\,w_2(du)}-Q(\beta_0)
\]
with the quotient $Q(\beta):=W_1(\beta)/W_2(\beta)$. For all indices $\beta\in[0,2)\setminus\{\beta_0\}$  we should have $\abs{Q(\beta)-Q(\beta_0)}\ge c\abs{\beta-\beta_0}$ for some $c>0$ such that under the alternative the test separates well.
A possible choice, inspired by Belomestny \cite{Bel2010}, is to take point measures \[w_j=\delta_{\eta_j}-\eta_j^2\delta_1,\quad j=1,2,\]
with different $\eta_1,\eta_2\in(0,1)$. Then $Q(\beta)=(\eta_1^\beta-\eta_1^2)/(\eta_2^\beta-\eta_2^2)$ satisfies the separation requirement (its derivative is continuous and does not change sign).

Consider the test problem $H_0:{\cal T}\in\Theta_0$ versus $H_1:{\cal T}\in \Theta_{1,n}$ for
\begin{align}
\Theta_0&=\Big\{{\cal T}\in{\cal D}(\bar\sigma)\,\Big|\,\abs{\Re(\psi_{\cal T}(u))+\tfrac{\sigma^2}{2}u^2+R\abs{u}^{\beta_0}}\le KR(1+\abs{u}^{\beta_0-\rho}),\,R>0\Big\},\label{EqTheta0BGI}\\
\Theta_{1,n}&=\Big\{{\cal T}\in{\cal D}(\bar\sigma)\,\Big|\,\abs{\Re(\psi_{\cal T}(u))+\tfrac{\sigma^2}{2}u^2+R\abs{u}^\beta}\le KR(1+\abs{u}^{\beta-\rho}),\,\abs{Q(\beta)-Q(\beta_0)}\ge r_n,R> 0\Big\}\label{EqTheta1BGI}
\end{align}
within triplets satisfying a uniform maximal decay condition
\[ {\cal D}(\bar\sigma)=\{{\cal T}\,|\,\forall\,u\in\R:\,\Re(\psi_{\cal T}(u))\in[-\tfrac{\bar\sigma^2}2 (1+u^2),0]\}
\]
and with some $\beta_0\in(0,2)$, a separation rate $r_n\downarrow 0$ and $K,\bar\sigma^2>0$, $\rho\in (0,\beta_0]$.  The decay bound involving $\bar\sigma^2$ is used to bound the stochastic error uniformly. In practice and also in theory, it may be replaced by a minor overestimate of the true volatility.
The values $K$ and $\rho$ are needed to bound uniformly the bias due to the second largest jump index. Regarding $K$ note that a very small level $R$ for the leading index $\beta$ might be confounded by a high level $R'$ of the second index $\beta-\rho$.  We get the following first result where the bias bound for $T_n$ dominates the stochastic part.

\begin{proposition}
Suppose $n\Delta_n^{2+\rho-\beta_0}\to\infty$. Consider the test on \eqref{EqTheta0BGI} and \eqref{EqTheta1BGI} that rejects if
\[ \abs{T_n}>\frac{K+\eps}{W_2(\beta_0)}(U_n^{-\beta_0}+U_n^{-\rho}
(\norm{w_1}_{TV}+\norm{w_2}_{TV}Q(\beta_0)))
\]
with $U_n^2=\Delta_n^{-1}\bar\sigma^{-2}\log (n\Delta_n^{2+\rho-\beta_0})$. Then the test
has asymptotic level $0$ and for the separation rate $r_n\to 0$ such that
\[ \frac{r_n}{\Delta_n^{\rho/2}(\log n)^{-\rho/2}}\to\infty\]
the test attains on $\Theta_{1,n}$ the asymptotic power 1.
\end{proposition}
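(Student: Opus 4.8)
\emph{Approach.} Since the statistic is a ratio I would write $T_n=(N_n-Q(\beta_0)D_n)/D_n$ with the empirical weighted exponents $N_n=\int_0^1\Re(\hat\psi_n(U_nu))\,w_1(du)$ and $D_n=\int_0^1\Re(\hat\psi_n(U_nu))\,w_2(du)$, and with deterministic counterparts $\bar N_n,\bar D_n$ obtained by replacing $\hat\psi_n$ by $\psi$. Using $\Re(\hat\psi_n(u))=\Delta_n^{-1}\log\abs{\hat\phi_n(u)}$ and $\Re(\psi(u))=\Delta_n^{-1}\log\abs{\phi_n(u)}$, the analysis splits into a deterministic \emph{bias} part ($\bar N_n,\bar D_n$ versus their leading terms) and a \emph{stochastic} part ($N_n-\bar N_n$, $D_n-\bar D_n$). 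The plan is to treat these two in turn and then combine them for the level and the power statements.

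\emph{Bias.} First I would exploit the two constraints on the weights, $\int_0^1u^2w_j(du)=0$ and $W_j(\beta)=\int_0^1u^\beta w_j(du)>0$. Inserting $\Re(\psi(U_nu))=-\tfrac{\sigma^2}2U_n^2u^2-RU_n^\beta u^\beta+e(U_nu)$ with $\abs{e(v)}\le KR(1+\abs v^{\beta-\rho})$ from \eqref{EqTheta0BGI}–\eqref{EqTheta1BGI}, the quadratic term is annihilated by the first constraint and I obtain $\bar N_n=-RU_n^\beta W_1(\beta)+\mathrm{rem}_1$, $\bar D_n=-RU_n^\beta W_2(\beta)+\mathrm{rem}_2$, where, using $u\le1$ and $\beta\ge\rho$, $\abs{\mathrm{rem}_j}\le KR\,(U_n^{\beta-\rho}+1)\norm{w_j}_{TV}$. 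Dividing by the leading $RU_n^\beta$ produces exactly the two contributions $U_n^{-\rho}$ and $U_n^{-\beta}$ that appear in the rejection threshold (with $\beta=\beta_0$ under $H_0$), so that $\bar N_n/\bar D_n=Q(\beta)+O(U_n^{-\rho})$. Hence under $H_0$ the deterministic part of $T_n$ is bounded, with the stated constants, by the $K$-multiple of the threshold, whereas under $H_1$ it satisfies $\abs{\bar N_n/\bar D_n-Q(\beta_0)}\ge r_n-CU_n^{-\rho}$.

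\emph{Stochastic error.} For the random deviation I would rely only on two facts valid \emph{uniformly} over ${\cal D}(\bar\sigma)$: the variance bound $\E[\abs{\hat\phi_n(u)-\phi_n(u)}^2]\le 1/n$ and $\abs{\phi_n(u)}^{-1}\le e^{\Delta_n\bar\sigma^2(1+u^2)/2}$. Writing $\log\abs{\hat\phi_n(u)}-\log\abs{\phi_n(u)}=\log(1+\xi_n(u))$ with $\xi_n(u)=(\abs{\hat\phi_n(u)}-\abs{\phi_n(u)})/\abs{\phi_n(u)}$ and $\E[\xi_n(u)^2]\le (n\abs{\phi_n(u)}^2)^{-1}$, the choice $U_n^2=\Delta_n^{-1}\bar\sigma^{-2}\log(n\Delta_n^{2+\rho-\beta_0})$ yields $\E[\xi_n(U_n)^2]\lesssim\Delta_n^{2+\rho-\beta_0}$ and thus a noise level in $\Re(\hat\psi_n(U_nu))$ of order $\Delta_n^{(\rho-\beta_0)/2}$. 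Since the signal $\abs{\bar D_n}$ is of order $U_n^{\beta_0}\asymp\Delta_n^{-\beta_0/2}(\log n)^{\beta_0/2}$, the stochastic contribution to $T_n$ is $O_P(\Delta_n^{\rho/2}(\log n)^{-\beta_0/2})$; as $\rho<\beta_0$ and $\log(n\Delta_n^{2+\rho-\beta_0})\to\infty$ this is $o_P$ of the threshold $\asymp\Delta_n^{\rho/2}(\log n)^{-\rho/2}$. Because $\abs{\bar D_n}\to\infty$ dominates its own $O_P$-fluctuation, $D_n$ stays bounded away from $0$ with probability tending to one, so the ratio is harmless. Combining: under $H_0$ the bias sits below the $K$-part of the threshold and the $\eps$-slack absorbs the vanishing stochastic part, giving asymptotic level $0$; under $H_1$ the deterministic separation $r_n$ dominates the threshold while the noise is negligible, giving asymptotic power $1$. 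Uniformity is automatic since both controlling facts hold uniformly over ${\cal D}(\bar\sigma)$.

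\emph{Main obstacle.} The genuinely delicate point is the behaviour of the logarithm at the largest frequency $U_n$, where $\abs{\phi_n(U_n)}\asymp(n\Delta_n^{2+\rho-\beta_0})^{-1/2}$ is smallest. When $\Delta_n\to0$ one has $\xi_n(U_n)=o_P(1)$ and the linearisation $\log(1+\xi)\approx\xi$ is unproblematic; but in the low-frequency boundary $\Delta_n\equiv1$ only $\xi_n(U_n)=O_P(1)$ holds, so I cannot linearise and must instead show that $\Delta_n^{-1}\log\abs{\hat\phi_n(U_n)}$ is merely \emph{tight}. This requires keeping $\abs{\hat\phi_n(U_n)}$ away from $0$, i.e. an anti-concentration (small-ball) estimate $P(\abs{\hat\phi_n(U_n)}<t\abs{\phi_n(U_n)})\to0$ as $t\to0$ uniformly in $n$, which the second-moment bound alone does not deliver and which one reads off from the central limit theorem for $\hat\phi_n$ sketched later in the paper. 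Once this tightness is secured, the fact that the signal $U_n^{\beta_0}\to\infty$ dwarfs the $O_P(1)$ noise closes the gap, and the remaining steps are the routine bias and Chebyshev estimates above.
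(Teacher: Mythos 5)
Your bias and variance computations are exactly the paper's: the weight constraints annihilate the $\sigma^2u^2$ term and give the uniform bound \eqref{EqBiasBGI1} with the two contributions $U_n^{-\beta}$ and $U_n^{-\rho}$; on the stochastic side, linearising the logarithm and combining $\E[\abs{\hat\phi_n(u)-\phi_n(u)}^2]\le 1/n$ with $\max_{u\in[0,1]}\abs{\phi_n(U_nu)}^{-2}\le\exp(\Delta_n\bar\sigma^2(1+U_n^2))\lesssim n\Delta_n^{2+\rho-\beta_0}$ yields a noise contribution to $T_n$ of order $U_n^{-\beta_0}(U_n\Delta_n^{1/2})^{\rho-\beta_0}\cdot U_n^{\beta_0-\rho}\cdot U_n^{\rho-\beta_0}$, i.e.\ $o_P(U_n^{-\rho})$ for $\rho<\beta_0$, so that the bias dominates, the $\eps$-inflation of the threshold gives level $0$, and the separation rate gives power $1$. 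Up to this point you have reproduced the paper's proof.

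The genuine divergence, and the gap, is in how $\abs{\hat\phi_n}$ is kept away from zero so that the logarithm can be controlled. The paper does \emph{not} use a CLT there: it introduces the good event ${\cal G}_n=\{\forall u\in[0,1]:\abs{\hat\phi_n(U_nu)}\ge\tfrac12\abs{\phi_n(U_nu)}\}$, computes all second-moment bounds with the indicator ${\bf 1}_{{\cal G}_n}$ inside, and proves $P({\cal G}_n)\to1$ from a \emph{uniform} deviation bound for the empirical characteristic function process (Theorem 4.1 of Neumann and Rei{\ss}, via bracketing entropy) together with the lower bound $\min_{u\in[0,1]}\abs{\phi_n(U_nu)}\ge n^{-1/2+\eps/4}$. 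Your substitute --- pointwise anti-concentration ``read off from the CLT'', giving tightness of $\Delta_n^{-1}\log\abs{\hat\phi_n(U_n)}$ --- does not suffice for the statement as claimed, for two reasons. First, level and power are asserted uniformly over the composite classes $\Theta_0$ and $\Theta_{1,n}$, and the CLT sketched in the paper is explicitly non-uniform (the paper itself remarks that uniformity of the CLT over the null is not established); tightness for each fixed triplet does not yield $\sup_{{\cal T}\in\Theta_0}P_{\cal T}(\text{reject})\to0$. One would need a small-ball estimate with constants controlled over ${\cal D}(\bar\sigma)$, e.g.\ via a Berry--Esseen bound for $(\Re\hat\phi_n,\Im\hat\phi_n)$ --- an additional ingredient that your two ``uniform facts'' do not contain, which is why your closing claim that ``uniformity is automatic'' fails precisely at this step. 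Second, the statistic integrates $\Re(\hat\psi_n(U_nu))$ against general finite weighting measures $w_1,w_2$, so the lower bound on $\abs{\hat\phi_n(U_nu)}$ must hold simultaneously for all $u$ in their supports; pointwise anti-concentration at the top frequency does not give this, whereas the entropy-based uniform bound does (only for finitely supported weights such as $w_j=\delta_{\eta_j}-\eta_j^2\delta_1$ would pointwise control suffice). To your credit, you have located exactly the delicate point: the paper's own treatment is thinnest here, since the asserted bound $\min_u\abs{\phi_n(U_nu)}\ge n^{-1/2+\eps/4}$ (needed to beat the $\sqrt{(\log n)/n}$ uniform deviation) holds uniformly over $\Theta_0$ only when $\Delta_n^{2+\rho-\beta_0}\lesssim n^{-\eps/2}$, i.e.\ it implicitly excludes the low-frequency boundary $\Delta_n\equiv1$ that worries you; but your proposed repair does not close that case either.
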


\begin{proof}
For ${\cal T}\in\Theta_0\cup\Theta_{1,n}$ with general jump index $\beta$ we obtain from the above arguments  the uniform bias bound
\begin{align}\label{EqBiasBGI1}
&\babs{\frac{\int_0^1 \Re(\psi(U_nu))\,w_1(du)}{\int_0^1 \Re(\psi(U_nu))\,w_2(du)}-Q(\beta)}
\le
\tfrac{K}{W_2(\beta)+O(U_n^{-\rho})}(U_n^{-\beta}+U_n^{-\rho}(\norm{w_1}_{TV}+\norm{w_2}_{TV}Q(\beta))).
\end{align}
The stochastic error on the good event
\begin{equation}\label{EqGn} {\cal G}_n:=\{\forall u\in[0,1]:\abs{\hat\phi_n(U_nu)}\ge \tfrac12\abs{\phi_n(U_nu)}\}
\end{equation}
can be bounded via $\hat\psi_n-\psi=\log(1+(\hat\phi_n-\phi_n)/\phi_n)$ and Taylor expansion by
\[ \babs{\hat\psi_n(U_nu)-\psi(U_nu)-\frac{\hat\phi_n(U_nu)-\phi_n(U_nu)}
{\Delta_n\phi_n(U_nu)}}\le 2\frac{\abs{\hat\phi_n(U_nu)-\phi_n(U_nu)}^2}
{\Delta_n\abs{\phi_n(U_nu)}^2}.
\]
This implies
\begin{align*}
&\E\Big[\Big(\int_0^1 \Re(\hat\psi_n(U_nu)-\psi(U_nu))\,w_j(du)\Big)^2{\bf 1}_{{\cal G}_n} \Big]\le n^{-1}\Delta_n^{-2}\norm{w_j}_{TV}^2 \max_{u\in[0,1]} \abs{\phi_n(U_nu)}^{-2}(1+o(1)).
\end{align*}
For our choice of $U_n\to\infty$
\[\max_{u\in[0,1]}\abs{\phi_n(U_nu)}^{-2}\le \exp(\Delta_n\bar\sigma^2(1+U_n^2))\lesssim n\Delta_n^{2+\rho-\beta_0}
\]
holds. We obtain
the uniform bound under $H_0$
\begin{align*}
&\babs{\frac{\int_0^1 \Re(\hat\psi_n(U_nu))\,w_1(du)} {\int_0^1 \Re(\hat\psi_n(U_nu))\,w_2(du)}- \frac{\int_0^1 \Re(\psi(U_nu))\,w_1(du)} {\int_0^1 \Re(\psi_n(U_nu))\,w_2(du)}}{\bf 1}_{{\cal G}_n}\\
&\qquad\qquad\lesssim U_n^{-\beta_0}\max_{j=1,2}\babs{\int_0^1 \Re(\hat\psi_n(U_nu)-\psi(U_nu))\,w_j(du)}\\
&\qquad\qquad=O_P(U_n^{-\beta_0}\Delta_n^{-1}n^{1/2}\Delta_n^{1+(\rho-\beta_0)/2})
=o_P(U_n^{-\rho}),
\end{align*}
where the small $o_P$ is due to $\Delta_n^{-1}=o(U_n^2)$ for $n\Delta_n^{2+\rho-\beta_0}\to\infty$. Thus, the bias bound from \eqref{EqBiasBGI1} dominates, which defines up to an inflation by $\eps$ the acceptance interval, and the level of the test tends to zero, provided $P({\cal G}_n)$ tends to one under $H_0$. Because of $\min_{u\in[0,1]}\abs{\phi_n(U_nu)}\ge n^{-1/2+\eps/4}$ for large enough $n$, which is of larger order than $\sqrt{(\log n)/n}$, we infer $P({\cal G}_n)\to 1$ under $H_0$ from the uniform deviation bound for the empirical characteristic process, proved in Theorem 4.1 of \cite{NeuReiss2009} using bracketing entropy.

The power result is directly obtained from the corresponding bias bound under the separation rate since the same stochastic error bound shows that the bias part is dominating.
\end{proof}

In the low-frequency setting $\Delta_n=1$ the rate $r_n$ is exactly that of Theorem 6.7 in Belomestny \cite{Bel2010}, which is proved there to be minimax optimal. In general, the condition $n\Delta_n^{2+\rho-\beta_0}\to\infty$ seems artificial, however. In the high-frequency regime $\Delta_n=\frac1n$ it requires $\beta_0-\rho>1$, which in particular excludes jump indices of order less than one. On the other hand, we know from Ait-Sahalia and Jacod \cite{ASJJ2012} that also $\beta_0\le 1$ can be estimated and thus tested. The fundamental reason for our suboptimal result in this situation is that we have not used the special properties of the empirical characteristic function under a dominating Brownian motion part.

In fact, instead of allowing for positive volatility exactly the same approach could also allow for a dominating $\alpha$-stable process with $\alpha>\beta_0$, when the weights satisfy the filtering property $\int_0^1u^\alpha w(du)=0$. Since the characteristic function then decays more slowly, we choose $U_n$ of order $(\Delta_n^{-1}\log(n\Delta_n^{2+\rho-2\beta_0/\alpha}))^{1/\alpha}$, assuming $n\Delta_n^{2+\rho-2\beta_0/\alpha}\to\infty$, and obtain the separation rate $(\Delta_n/\log n)^{\rho/\alpha}$. For $\Delta_n=\frac1n$ we thus impose $\beta_0>(1+\rho)\alpha/2$, which requires $\beta_0>\alpha/2$. The latter is now a very natural condition because it is well known (e.g., Sato \cite{Sato1999}) that for $\beta_0<\alpha/2$  the laws of a (tempered) $\alpha$-stable and of the sum of a (tempered) $\alpha$-stable and a (tempered) $\beta_0$-stable process on the path space $D([0,1])$ are equivalent, which statistically means that these two process models cannot be completely separated by a test even based on continuous-time observations on $[0,1]$.

The distinction between a Brownian motion and an $\alpha$-stable process with $\alpha$ close to 2 seems statistically not significant after all we have seen so far, but the opposite is the case. The difference does not concern the 'signal' in the characteristic function, but the factorisation of the covariance function of the empirical characteristic function. We have
\[ \frac{\Cov_{\C}(\hat\phi_n(u),\hat\phi_n(v))}{\phi_n(u)\phi_n(-v)}
=\frac1n
\Big(\exp\Big(\Delta_n(\sigma^2uv+R(\abs{u}^\beta+\abs{v}^\beta-\abs{u-v}^\beta)
+\tau(u)+\tau(v)-\tau(u-v))\Big)-1\Big).
\]
Expanding the exponential this evaluates for $\abs{u},\abs{v}\to\infty$ with $\abs{u}+\abs{v}=o(\Delta_n^{-1/2})$ to
\begin{equation}\label{EqCovExpans}
\frac1n \Big(O(\Delta_n(\abs{u}^\beta+\abs{v}^\beta))+\sum_{k=1}^{2m-1}\frac{(\Delta_n\sigma^2uv)^k}{k!}+O(\Delta_n^{2m}(uv)^{2m})\Big).
\end{equation}
Let us profit from the product structure in $u$ and $v$ of the summands.
For $U_n=o(\Delta_n^{-1/2})$ and finite weight measures $w_j$, $j=1,2$,  on the Borel sets of $[-1,1]$ satisfying
\begin{equation}\label{Eqwj2}
w_j(-B)=w_j(B), \quad\int_0^1 u^{2p}w_j(du)=0\text{ for }p=1,\ldots,m-1
\end{equation}
we obtain on the good event ${\cal G}_n$ from \eqref{EqGn} the estimate
\begin{align*}
&\E\Big[\babs{\int_{0}^{1}\Re(\hat\psi_n(U_nu)-\psi(U_nu))w(u)du}^2{\bf 1}_{{\cal G}_n}\Big]\\
&\quad \le
\E\Big[\babs{\int_{-1}^{1}\Delta_n^{-1}\Big(\frac{\hat\phi_n(U_nu)-\phi_n(U_nu)}{\phi_n(U_nu)}
+O\Big(\Big(\frac{\hat\phi_n(U_nu)-\phi_n(U_nu)}{\phi_n(U_nu)}\Big)^2\Big)\Big)w_j(du)}^2\Big]\\
&\quad =\Delta_n^{-2}O\big(n^{-1}\Delta_nU_n^\beta+n^{-1}(\Delta_nU_n^2)^{2m}+n^{-2}\Delta_nU_n^2\big).
\end{align*}
For the last bound we have inserted the sum from \eqref{EqCovExpans} whose integral vanishes by the properties \eqref{Eqwj2} of $w$, the order $U_n=o(\Delta_n^{-1/2})$ and the bound
$\E[\abs{\int(\cdots)^2dw}^2]=O(\max_u\E[(\cdots)^4])$ by Jensen's inequality.
As in the preceding proof, we have $P({\cal G}_n)\to 1$ and thus uniformly under $H_0$
\begin{align*}
T_n&=O(U_n^{-\rho})+O_P\big((n\Delta_n)^{-1/2}U_n^{-\beta_0}(U_n^{\beta_0/2} +\Delta_n^{m-1/2}U_n^{2m}+n^{-1/2}U_n)\big)\\
&=O(U_n^{-\rho})+O_P((n\Delta_n)^{-1/2}(U_n^{-\beta_0/2}+\Delta_n^{m-1/2}U_n^{2(m-\beta_0/2)})).
\end{align*}
Let us suppose $\rho<\beta_0/2$ such that $U_n^{-\beta_0/2}=o(U_n^{-\rho})$ and by balancing the first and last rate we choose the maximal frequency
\begin{equation}\label{EqUBGI2}
U_n=n^{1/(4m+2\rho-2\beta_0)} \Delta_n^{-(m-1)/(2m+\rho-\beta_0)}.
\end{equation}
Exactly as before we thus derive the following result for the same test, but at a different frequency and with moment conditions on the weights.

\begin{proposition}
Suppose $\rho<\beta_0/2$ and that $w_1$, $w_2$ satisfy \eqref{Eqwj2} for some $m\ge 2$. Consider the test on \eqref{EqTheta0BGI} and \eqref{EqTheta1BGI} that rejects if
\[ \abs{T_n}>\frac{K+\eps}{W_2(\beta_0)}(U_n^{-\beta}+U_n^{-\rho}
(\norm{w_1}_{TV}+\norm{w_2}_{TV}Q(\beta_0))).
\]
with $U_n$ from \eqref{EqUBGI2} and any $\eps\in (0,1)$. Then the test
has asymptotic level $0$ and for the separation rate $r_n\to 0$ such that
\[ \frac{r_n}{U_n^{-\rho}}=\frac{r_n}{n^{\rho/(4m+2\rho-2\beta_0)} \Delta_n^{-(m-1)\rho/(2m+\rho-\beta_0)}}\to\infty
\]
the test attains on $\Theta_{1,n}$ the asymptotic power 1.
\end{proposition}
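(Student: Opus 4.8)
The plan is to run the argument of the previous proposition essentially verbatim, the decisive difference being that the stochastic error of $T_n$ is now controlled through the refined variance bound derived in the displays preceding the statement, which exploits the product structure of the covariance function \eqref{EqCovExpans} together with the vanishing moments \eqref{Eqwj2} of $w_1,w_2$. First I would record, exactly as in \eqref{EqBiasBGI1}, the deterministic bias bound valid for every ${\cal T}\in\Theta_0\cup\Theta_{1,n}$ with leading index $\beta$; under $H_0$ (where $\beta=\beta_0$) and using $\rho<\beta_0/2<\beta_0$ this reduces to
\[
\Big|\tfrac{\int_0^1\Re(\psi(U_nu))w_1(du)}{\int_0^1\Re(\psi(U_nu))w_2(du)}-Q(\beta_0)\Big|\le \tfrac{K+o(1)}{W_2(\beta_0)}\big(U_n^{-\beta_0}+U_n^{-\rho}(\norm{w_1}_{TV}+\norm{w_2}_{TV}Q(\beta_0))\big),
\]
which is precisely the rejection threshold up to the inflation by $\eps$.

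The heart of the proof is the stochastic part. On the good event ${\cal G}_n$ from \eqref{EqGn} I would linearise $\hat\psi_n-\psi=\log(1+(\hat\phi_n-\phi_n)/\phi_n)$, insert the covariance expansion \eqref{EqCovExpans}, and integrate against $w_j\otimes w_j$. The key observation is that each summand $(\Delta_n\sigma^2U_n^2uv)^k/k!$ factorises in $u$ and $v$, so the symmetry $w_j(-B)=w_j(B)$ kills the odd moments while the conditions $\int_0^1u^{2p}w_j(du)=0$ for $p=1,\dots,m-1$ annihilate the even moments up to order $2m-2$; hence every term $k=1,\dots,2m-1$ drops out and only the $O(\Delta_nU_n^\beta/n)$, the remainder $O((\Delta_nU_n^2)^{2m}/n)$, and the $O(\Delta_nU_n^2/n^2)$ contributions survive, as stated. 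The quadratic remainder of the logarithm is disposed of by Jensen's inequality and fourth moments. Dividing by the denominator $\asymp U_n^{\beta_0}$ then yields
\[
T_n=O(U_n^{-\rho})+O_P\big((n\Delta_n)^{-1/2}(U_n^{-\beta_0/2}+\Delta_n^{m-1/2}U_n^{2(m-\beta_0/2)})\big)\quad\text{under }H_0,
\]
the first stochastic term being $o(U_n^{-\rho})$ precisely because $\rho<\beta_0/2$.

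A pleasant simplification relative to the previous proposition is that $U_n=o(\Delta_n^{-1/2})$ forces $\Delta_nU_n^2\to0$, so $\min_{u\in[0,1]}\abs{\phi_n(U_nu)}\ge e^{-\bar\sigma^2\Delta_n(1+U_n^2)/2}$ is bounded away from zero; the uniform deviation bound of Theorem 4.1 in \cite{NeuReiss2009} then gives $P({\cal G}_n)\to1$ under $H_0$ immediately. With the frequency \eqref{EqUBGI2} the surviving stochastic rate is balanced against the bias $U_n^{-\rho}$, so the bias bound defines the acceptance region and the level tends to $0$. Under the alternative the same bias bound gives $\abs{T_n}\ge r_n-O(U_n^{-\rho})-O_P(U_n^{-\rho})$, and the separation assumption $r_n/U_n^{-\rho}\to\infty$ forces $\abs{T_n}$ above the threshold with probability tending to one, i.e. asymptotic power $1$.

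The main obstacle I anticipate is the refined variance bound: one must make the moment cancellation fully rigorous, carefully handling the real parts and the conjugate ($-v$) arguments entering the covariance of $\Re(\hat\psi_n)$, and verifying that the quadratic logarithmic remainder is genuinely of lower order. A second, more delicate point is that at the maximal balancing frequency \eqref{EqUBGI2} the surviving stochastic rate equals the bias rate $U_n^{-\rho}$; securing asymptotic level exactly $0$ rather than a positive constant therefore requires the stochastic error to be strictly $o_P(U_n^{-\rho})$, which I would enforce by taking $U_n$ at the rate \eqref{EqUBGI2} up to a slowly diverging factor, exactly as the logarithm did before, the separation slack $r_n/U_n^{-\rho}\to\infty$ absorbing this harmless loss. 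As in the previous proof, the structural constants (including the intensity $R$ and $\norm{w_j}_{TV}$) are held fixed, so the uniformity is over the nuisance $(\sigma^2,b,\nu)$ and the second-index level governed by $K$ and $\rho$.
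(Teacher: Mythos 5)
Your proposal is correct and follows essentially the same route as the paper: the paper's proof consists precisely of the derivation displayed before the statement (the covariance expansion \eqref{EqCovExpans}, the cancellation of the terms $k=1,\dots,2m-1$ through the symmetry and vanishing moments \eqref{Eqwj2}, Jensen's inequality for the quadratic remainder of the logarithm, and the good event \eqref{EqGn}), followed by the remark that level and power are concluded ``exactly as before''. Your second ``delicate point'' is in fact sharper than the paper itself: at the exact balancing frequency \eqref{EqUBGI2} the stochastic error is $O_P(U_n^{-\rho})$ rather than $o_P(U_n^{-\rho})$, so Chebyshev only bounds the rejection probability under $H_0$ by a constant, and your fix --- undershooting \eqref{EqUBGI2} by a slowly diverging factor and absorbing the resulting loss into the separation slack $r_nU_n^{\rho}\to\infty$ --- is exactly what is needed to make the asserted asymptotic level $0$ rigorous.
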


For the high-frequency case  $\Delta_n=\frac1n$ the separation rate is $n^{-\rho(m-1/2)/(2m+\rho-\beta_0)}$, which for $m=2$ ($w_j$ only filter out $u^2$, which we need anyway) yields $n^{-3\rho/(8+2\rho-2\beta_0)}$ while for $m\to\infty$ the separation rate approaches $n^{-\rho/2}$. Note that by the Weierstra{\ss} approximation theorem no weight $w$ can satisfy \eqref{Eqwj2} for all $m$, but for all finite values of $m$ such weights  exist. Given the bound $\rho<\beta_0/2$, we can thus almost attain the rate $n^{-\beta_0/4}$ provided the bias due to the nuisance part only grows with exponent $\rho=\beta_0/2$ in $\Re(\psi)$ and the weights have a high number $m$ of vanishing moments.

This rate consideration is extremely interesting
because even in the simple parametric model where the sum of a Brownian motion and a $\beta_0$-stable process is observed, the optimal rate is
$n^{-\beta_0/4}(\log n)^{\beta_0/4-1}$ (Ait-Sahalia and Jacod \cite{ASJJ2012}). The authors provide a procedure  which for very general semi-martingales estimates a generalized jump index $\beta_0$ with a rate that is significantly slower unless $\beta_0$ is close to zero, compare the discussion in their Section 6.2.
The procedure is based on counting increments that are larger than for the pure Brownian case, their truncation value $u_n$ plays a role as the inverse frequency $U_n^{-1}$ in our approach. In a second step the authors consider also the estimation of secondary and further successive Blumenthal-Getoor-type indices which suffers, however, from even slower rates of convergence.

Another kind of jump intensity measure has been considered by Trabs \cite{Trabs2012} for pure jump self-decomposable processes where the density of the L\'evy measure behaves like $\nu(x)\thicksim \alpha \abs{x}^{-1}$ for $x\to 0$ for some $\alpha\ge 0$, possibly different for $x\downarrow 0$ and $x\uparrow 0$. A well known example is furnished by Gamma processes and the value $\alpha$ determines many probabilistic properties of the self-decomposable processes. Estimating $\alpha$ is possible with polynomial rates, even for option price data, and the rate itself depends on $\alpha$.

In view of the preceding results and discussion let us conclude by asking the imminent question, for which there is no satisfactory understanding yet, leaving aside the even more challenging questions for general semi-martingales.

\begin{problem}
Construct an estimator or a test for $\beta_0$ which achieves exactly the parametric rate for $\beta_0$-stable processes. If this is not possible, provide a lower bound proof for that.
\end{problem}

\section{Nonparametric test on the jump measure}

\subsection{General approach}

Inference on the jump measure as a natural nonparametric object is another key topic of current research. If we are not interested in the realized jumps, but in their intensity, we have to assume an asymptotically infinite time horizon $n\Delta_n\to\infty$. Concerning testing problems there are again a variety of interesting hypothesis formulations. We restrict ourselves to a natural form, based on the observation that small jumps and volatility interfere and from a probabilistic side the finite measure
\[ \nu_\sigma(dx)=x^2\nu(dx)+\sigma^2\delta_0(dx),\]
which combines the downweighted jump measure at zero with a point measure of size $\sigma^2$ at zero, determines the topology in the law of infinitely divisible distributions: if a sequence $(\nu_\sigma^{(m)})_m$ converges weakly to $\nu_\sigma$ and the drifts satisfy $b_m\to b$, then also the finite-dimensional distributions of the corresponding L\'evy processes converge, see e.g. Sato \cite{Sato1999} or Neumann and Rei{\ss} \cite{NeuReiss2009}. The measure $\nu_\sigma$ is also quite tractable as
\[\psi''(u)=-\sigma^2+\int (-x^2)e^{iux}\nu(dx)=-{\cal F}\nu_\sigma(u)\]
holds. We thus aim at testing the null hypothesis $H_0:\nu_\sigma=\nu_{\sigma,0}=x^2\nu_0(dx)+\sigma_0^2\delta_0(dx)$ for some given jump measure $\nu_0$ and volatility $\sigma_0^2$. In classical nonparametrics, e.g. in regression, the local alternative would be formulated by postulating an $L^2$-distance of rate $r_n$ and a given smoothness class, cf. Ingster and Suslina \cite{IngsterSus2003}. Here in the case of finite measures, we shall also consider the case of point measure to encompass the volatility.

Our test is based on the smoothed $L^2$-statistics
\[ T_n=\int \abs{(\hat\psi_n''(u)-\psi_{0}''(u)){\cal F}K_h(u)}^2du,\]
where $K:\R\to\R$ is some kernel function, i.e. $K\in L^2(\R)$ with $\int K=1$, and $K_h(x):=h^{-1}K(x/h)$ for some bandwidth $h>0$. Moreover, we require $K$ to be band-limited, more specifically $\supp({\cal F}K)\subset[-1,1]$, such that the integral only extends over the interval $[-h^{-1},h^{-1}]$. If the true measure is $\nu_{\sigma,1}$ and if the empirical characteristic function is close to the true one, then by Plancherel's theorem $T_n$ measures basically the distance $\int (K_h\ast (\nu_{\sigma,1}-\nu_{\sigma,0}))(x)^2dx$. When the bandwidth $h$ tends to zero, the distance tends to infinity unless $\nu_{\sigma,1}-\nu_{\sigma_0}$ possesses an $L^2$-density whose squared $L^2$-norm is then obtained. As a side remark we can also view ${\cal F}K_h(u)={\cal F}K(hu)$ as a weight function $w(U_n^{-1}u)$ with $U_n=h^{-1}$ and interpret the test statistics just as a frequency weighted average in the spirit of the preceding section.

To analyse the properties of $T_n$ under $H_0$, we linearize the dependence of $\psi''$ on $\phi$. We set
\[g:=\frac{\hat\phi_n-\phi_n}{\phi_n}\Rightarrow g''=\frac{(\hat\phi_n-\phi_n)'' -2 (\hat\phi_n-\phi_n)'\Delta_n\psi' - (\hat\phi_n-\phi_n)(\Delta_n\psi''-\Delta_n^2(\psi')^2)}{\phi_n}
\]
and obtain as $\delta_n:=\max_{j=0,1,2}\abs{g^{(j)}(u)}\to 0$
\[ \hat\psi_n''(u)-\psi''(u)=\frac{1}{\Delta_n}\frac{g''(u)(1+g(u))-g'(u)^2}{(1+g(u))^2}
=\Delta_n^{-1}\big(g''(u)+O(\delta_n^2)\big).
\]
Assuming finite  moments of order 8, i.e. $\int x^8\nu(dx)<\infty$, and using \[\Var_{\C}(\hat\phi_n^{(j)}(u))\lesssim n^{-1}(\Delta_n+{\bf 1}(j=0)),\, \E[\abs{\hat\phi_n^{(j)}(u)-\phi_n^{(j)}(u)}^4]\lesssim n^{-2}(\Delta_n+{\bf 1}(j=0))
\]
for derivatives of order $j=0,1,2$, we infer
\[ \abs{\hat\psi_n''(u)-\psi''(u)}^2=O_P\Big( \frac{1}{n\Delta_n^2\abs{\phi_n(u)}^2} \big(\Delta_n + \Delta_n^3\psi'(u)^2 +  \Delta_n^2\psi''(u)^2 +\Delta_n^4\psi'(u)^4+n^{-1}\abs{\phi_n(u)}^{-2}\big)\Big)
\]
uniformly in $u$. Together with $\norm{{\cal F}K_h}_{L^2}^2=O(h^{-1})$ and $\abs{\psi'(u)}\lesssim\abs{u}$, $\abs{\psi''(u)}\lesssim 1$ for $\abs{u}\to\infty$ this implies uniformly over $H_0$ for $h\in(0,1)$
\begin{equation}\label{EqTnH0}  T_n=O_P\Big(\max_{\abs{u}\le h^{-1}} \frac{h^{-1}}{n\Delta_n\abs{\phi_{n,0}(u)}^2} \big(1 + \Delta_n^2h^{-2}+\Delta_n^3h^{-4} +(n\Delta_n)^{-1}\abs{\phi_{n,0}(u)}^{-2}\big)\Big).
\end{equation}
The choice of the bandwidth $h$ is at our disposal and we shall discuss the  low-frequency setting and the case $\Delta_n\to 0$ separately. To remain concise, formal proofs are omitted. They follow along the lines of the preceding sections.

\subsection{Asymptotically small observation distance}

If $\Delta_n\to 0$ we may restrict to $h_n\ge\Delta_n^{1/2}$ which yields $\max_{\abs{u}\le h_n^{-1}}\abs{\phi_{n,0}(u)}^{-2}=O(1)$. The advantage is that the bound \eqref{EqTnH0} under $H_0$  simplifies considerably:
\[ T_n=O_P\Big(\frac{h_n^{-1}}{n\Delta_n} \Big).
\]

We can gain efficiency when $\norm{\nu_{\sigma,0}-\nu_{\sigma,1}}_{H^s}\le R$, that is $\nu_{\sigma,0}-\nu_{\sigma,1}$ has a Lebesgue density which lies in an $L^2$-Sobolev ball of regularity $s>0$ and radius $R>0$. Then standard approximation theory gives
\begin{align*}
\norm{K_h\ast(\nu_{\sigma,0}-\nu_{\sigma,1})}_{L^2} &\ge \norm{\nu_{\sigma,0}-\nu_{\sigma,1}}_{L^2}-\norm{\nu_{\sigma,0}-\nu_{\sigma,1}-K_h
\ast(\nu_{\sigma,0}-\nu_{\sigma,1})}_{L^2}\\
&= \norm{\nu_{\sigma,0}-\nu_{\sigma,1}}_{L^2}-O(Rh^s).
\end{align*}
Hence, we obtain for $h_n=(n\Delta_n)^{-1/(2s+1)}$ (assuming $h_n\gtrsim\Delta_n^{1/2}$, i.e. $\Delta_n\lesssim n^{-2s/(2s+3)}$) and for $\norm{\nu_{\sigma,0,n}-\nu_{\sigma,1,n}}_{L^2}\ge r_n$ with $r_n(n\Delta_n)^{s/(2s+1)}\to\infty$
\[  \sqrt{n\Delta_nh_n}\norm{K_{h_n}\ast(\nu_{\sigma,0,n}-\nu_{\sigma,1,n})}_{L^2}\ge \sqrt{n\Delta_nh_n}(r_n-O(Rh_n^s))\to\infty.
\]
Under $H_0$ the statistics $h_nn\Delta_nT_n$ is bounded in probability and any test that rejects if $n\Delta_nh_nT_n>\kappa_\alpha$ for suitable $\kappa_\alpha>0$ has asymptotically uniform level $\alpha\in(0,1)$. Under $H_1$ the above bias bound yields $n\Delta_nh_nT_n\to\infty$ in probability such that asymptotically the test has power 1.
For uniform power over $H_1$ we require $\int (x^2\vee x^8)\nu(dx)\le R$ over all triplets in $H_1$ to make the above linearisation and stochastic bound work uniformly (note the decay bound $\Re(\psi(u))\ge -\nu_\sigma(\R)u^2/2\ge -Ru^2/2$).

The rate  $(n\Delta_n)^{-s/(2s+1)}$ is certainly the optimal minimax rate for estimation since the jumps away from zero follow a compound Poisson process and even continuous-time observation on $[0,n\Delta_n]$ of a compound Poisson process with jump density in $H^s(\R)$ does not allow for faster estimation rates (it corresponds to density estimation with $n\Delta_n$ observations). Note that for testing with respect to separating hypotheses in $L^2$ the rates for idealised regression models can be improved to $(n\Delta_n)^{-s/(2s+1/2)}$ (Ingster and Suslina \cite{IngsterSus2003}), which in our setting could only be established if we can show $\Var(T_n)\le h(\E[T_n])^2$ and if we can perform an asymptotic bias correction for $\E[T_n]\not=0$.

A comprehensive theory for estimating the jump density under asymptotically small observation distances has been achieved by Figueroa-Lopez \cite{Fig2009}, using sieve or projection methods on intervals bounded away from zero, excluding the small jumps. For continuous-time observations the jumps on $[0,n\Delta_n]$  are observed directly and for sufficiently small $\Delta_n$ the increments can be used as a proxy for the jump sizes away from zero. The rate $(n\Delta_n)^{-s/(2s+1)}$ is achieved, an adaptive method via model selection is proposed and a minimax lower bound is proved in detail. The crucial condition on the speed $\Delta_n\to 0$ in this paper is more than exponential in $n$ and a much finer probabilistic analysis was needed to relax this condition to  $\Delta_n=o(n^{-1/3})$ (Figueroa-Lopez and Houdr\'e \cite{FigHoudre2009}), to be compared with  $\Delta_n= O(n^{-2/(2s+3)})$ in our case. This study of the discretisation error reveals a marvelous interplay between statistics and probability theory. Inference based on the second derivative $\hat\psi_n''$ of the empirical characteristic exponent has been exploited by Comte and Genon-Catalot \cite{ComteGC2011} to develop also a sieve or projection method with rate $(n\Delta_n)^{-s/(2s+3)}$ for estimating the smooth density $x^2\nu(dx)$ on $\R$ in the case $\sigma=0$. Adaptivity is again achieved by model selection, using sample splitting in addition. Closer related to the testing results obtained here are the confidence statements by Figueroa-Lopez \cite{Fig2011}, but they are again based on approximating continuous-time observation statistics.

Let us point out that for the (weighted) L\'evy measures $\nu_{\sigma,j}$ separation in other topologies than $L^2(\R)$ for their Lebesgue densities are natural. In particular,
we can be interested in the separation rate for different volatilities $\sigma_0^2$ and $\sigma_1^2$ or for the more general point measure case $r_n:=(\nu_{\sigma,0}-\nu_{\sigma,1})(\{x_0\})>0$ for some $x_0\in\R$. If we assume one or finitely many point measures in $\nu_{\sigma,0}-\nu_{\sigma,1}$ and otherwise a smooth density with $H^s$-norm uniformly bounded by $R>0$, we obtain as $h\to 0$
\[ \norm{K_h\ast(\nu_{\sigma,0}-\nu_{\sigma,1})}_{L^2}\ge r_n h^{-1/2}-O(Rh^s).\]
In order to separate $H_1$ from $H_0$ we need $r_n/(n\Delta_n)^{-1/2}\to\infty$ for the same specification as before, that is $h_n=(n\Delta_n)^{-1/(2s+1)}$ and $\Delta_n=O(n^{-2/(2s+3)})$ (here even non-positive smoothness $s\in(-1/2,0]$ is possible).
Instead of point measures we can also consider jump densities with singularities (poles or cusps) of the type $\abs{\nu_{\sigma_0}-\nu_{\sigma,1}}(x)\thicksim r_n\abs{x-x_0}^{1-\beta}$ for $x\to x_0$ and some $\beta<2$, e.g. stable-like behaviour with index $\beta$ of $\nu$ at $x_0=0$. Then $\norm{K_h\ast(\nu_{\sigma,0}-\nu_{\sigma,1})}_{L^2}\ge r_n h^{(3-2\beta)/2}-O(Rh^s)$ holds, assuming $H^s$-norm at most $R$ away from $x_0$ and $s>(3-\beta)/2$. The above analysis yields similarly for $r_n$ the rate $(n\Delta_n)^{-(s+\beta-3/2)/(2s+1)}$.
The rates are better than for differences with smooth densities and for $x_0\not=0$ also optimal for estimation. Yet, they are worse than those obtained by the specifically designed tests from the previous two sections on the volatility and on the jump index  because they are given in terms of the observation time $n\Delta_n$ instead of the observation number $n$. Maybe a tighter bound on the stochastic error is possible for $x_0=0$.

\subsection{Low-frequency observations}

For low-frequency observations $\Delta_n=1$ we cannot follow the previous route. We must choose $h_n\to 0$ and thus accept $\phi_n(h_n^{-1})\to 0$ exponentially fast in $h_n^{-1}$, assuming $\sigma^2>0$. Let us therefore choose $h_n=(\log(n^c))^{-1/2}$ for some small $c>0$ such that \eqref{EqTnH0} under $H_0$ becomes
\[T_n=O_P\big(h_n^{-1}n^{c\sigma_0^2-1}(h_n^{-4} +n^{c\sigma_0^2-1})\big)=O_P(n^{-\eps})\text{ for }c<(1-\eps)/\sigma_0^2.
\]
In the smooth jump density case $\norm{\nu_{\sigma,0}-\nu_{\sigma,1}}_{H^s}\le R$ it is essential that $r_n-O(Rh_n^s)\ge 0$ and we necessarily obtain a logarithmic separation rate $(\log n)^{-s/2}$. For point measure alternatives such that $\norm{K_h\ast(\nu_{\sigma,0}-\nu_{\sigma,1})}_{L^2}\ge r_n h^{-1/2}-O(Rh^s)$, imposing a uniform bound $\bar\sigma^2$ on $\sigma^2$, we obtain
with $c\bar\sigma^{2}<1$ the separation rate $(\log n)^{-(s+1/2)/2}$.

In all these cases the stochastic error is negligible compared to the bias if $c>0$ is chosen sufficiently small. This phenomenon is well known for so-called severely ill-posed statistical inverse problems like deconvolution with a Gaussian density. In fact, $\Delta\hat\psi_n'(u)=\hat\phi_n'(u)/ \hat\phi_n(u)$ implies in the bounded variation case ($\int_{-1}^1\abs{x}\nu(dx)<\infty$ and $\sigma=0$) for the law $P_\Delta$ of $X_\Delta$
\[ xP_\Delta(dx)=\Delta\Big(\Big(b-\int x\nu(dx)\Big)\delta_0(dx)+x\nu(dx)\Big)\ast P_\Delta(dx).\]
Since we have empirical access to $P_\Delta$, the characteristics are thus identified by deconvolving $xP_\Delta(dx)$ with $P_\Delta(dx)$. In classical nonparametric statistics this inversion is usually also performed by division in the Fourier domain. The main new features from a pure statistical point of view are that $xP_\Delta(dx)$ and $P_\Delta(dx)$ depend on the same unknown law (the problem is also coined {\it auto-deconvolution}), the empirical versions are highly correlated and the measure of interest $x\nu(dx)$ has some L\'evy-specific structure. A similar, but slightly more involved convolution equation is obtained for general L\'evy processes using $\hat\psi_n''(u)$. The plausible intuition
for this connection to deconvolution is that at low frequency sums of (an unknown number of) jumps are observed in the increments, moreover diluted by a Gaussian component, and it is at first sight very remarkable that consistent inference is possible at all.

The problem of estimating the jump distribution from observations of a compound Poisson process has come up early in insurance mathematics and is called decompounding. Inference on  the distribution function of $\nu$ has been first studied rigorously by Buchmann and Gr\"ubel \cite{BuchGr2003} who invert the nonlinear equation $P_\Delta=\exp^\ast(\Delta\nu)e^{-\Delta\lambda}$ in terms of the convolution exponential $\exp^\ast$. A first natural extension is to consider L\'evy processes of finite intensity, hence superpositions of compound Poisson processes and Brownian motion. Here, the main features of the general L\'evy case already appear and the weight function approach for the empirical characteristic exponent, initiated in Belomestny and Rei{\ss} \cite{BelReiss2006} for option price data, was successfully applied to low frequency data by Gugushvili \cite{Gug2009}. In both cases, logarithmic rates are obtained in the case $\sigma>0$ for all parameters in the characteristic and for the intensity $\lambda=\nu(\R)$. The optimal convergence rates become slower along the order $\sigma^2,b,\lambda,\nu$ which is clear from the influence on the characteristic exponent for high frequencies.

The problem of inference on $\nu_\sigma$ for general L\'evy processes was addressed by Neumann and Rei{\ss} \cite{NeuReiss2009} and extended by Kappus and Rei{\ss} \cite{KapReiss2010}, using a minimum-distance estimator for empirical characteristic functions. The problem has been studied more thoroughly and with a focus on adaptive estimation by Kappus \cite{Kappus2012}, improving results by Comte and Genon-Catalot \cite{ComteGC2010}, which brings in new perspectives and methodology on model selection with unknown heteroskedastic noise structures. Nickl and Rei{\ss} \cite{NicklReiss2012} then address the fundamental inference problem on the distribution-type function $N$ of $\nu$, defined by $N(x)=\nu((-\infty,x])$ for $x<0$ and $N(x)=\nu((x,\infty))$ for $x>0$. Given a slow polynomial decay of the characteristic function, a uniform central limit theorem for a natural plug-in estimator $\hat N_n(x)$ is provided, i.e. for the corresponding empirical process $(\sqrt{n}(\hat N_n(x)-N(x)),\abs{x}>\zeta)$, $\zeta>0$, very much  in the spirit of the classical Donsker theorem for the empirical distribution function. It is surprising that this result demands a very fine analysis of the underlying deconvolution operator by means of harmonic analysis and Fourier-multiplier theory as well as the theory of smoothed empirical processes.

Inference on the L\'evy measure has many parallels with inference on the distribution of an i.i.d. sample and already very natural questions inspire and stimulate research in other areas like the theory of jump processes and semi-martingales, adaptation techniques, empirical processes or Fourier-integral operators. This calls for further exploration, both from a theoretical and applied perspective.

\begin{problem}
Solve the basic statistical inference questions also for the L\'evy measure like natural analogues of goodness-of-fit tests (Kolmogorov-Smirnov, Cram\'er-von Mises), semiparametric efficiency, empirical process classes. Focus especially on the small jumps.
\end{problem}

\end{document}